\DeclareMathOperator{\spn}{span}
\newdefinition{problem}{Problem}
\crefname{problem}{Problem}{Problems}
\newtheorem{theorem}[problem]{Theorem}
\crefname{theorem}{Theorem}{Theorems}
\newtheorem{lemma}[problem]{Theorem}
\crefname{lemma}{Lemma}{Lemmas} 
\newtheorem{definition}[problem]{Definition}
\crefname{definition}{Definition}{Definitions}
\numberwithin{problem}{section}
\journal{Computer Methods in Applied Mechanics and Engineering}
\begin{document}

\begin{frontmatter}



\title{Stochastic Galerkin reduced basis methods for parametrized linear elliptic partial differential equations}

\author[1]{Sebastian Ullmann\fnref{fn1}}
\ead{ullmann@mathematik.tu-darmstadt.de}

\author[2]{Christopher M\"uller\corref{cor1}}
\ead{cmueller@mathematik.tu-darmstadt.de}

\author[2]{Jens Lang\fnref{fn2}}
\ead{lang@mathematik.tu-darmstadt.de}

\cortext[cor1]{Corresponding author}
\fntext[fn1]{Sebastian Ullmann was supported by the German Research Foundation within the Graduate School of Excellence 
Computational Engineering (DFG GSC233).}
\fntext[fn2]{Jens Lang is supported by the German Research Foundation within the collaborative research center TRR154 
“Mathematical 
Modeling, Simulation and Optimisation Using the Example of Gas Networks” (Project-ID 239904186, TRR154/2-2018, 
TP B01).}
\address[1]{Technical University of Darmstadt, Graduate School of Excellence Computational Engineering, Dolivostra\ss e 
15, 64293 Darmstadt, Germany}
\address[2]{Technical University of Darmstadt, Department of Mathematics, Dolivostra\ss e 15, 64293 Darmstadt, 
Germany}

\begin{abstract}
We consider the estimation of parameter-dependent statistics of functional outputs of elliptic boundary value problems 
(BVPs) with parametrized random and deterministic inputs. For a given value of the deterministic paremeter, a stochastic 
Galerkin finite element (SGFE) method can estimate the corresponding expectation and variance of a linear output at the 
cost of a single solution of a large block-structured linear system of equations. We propose a stochastic Galerkin 
reduced basis (SGRB) method as a means to lower the computational burden when statistical outputs are required for a 
large number of deterministic parameter queries.~Our working assumption is that we have access to the 
computational resources necessary to set up such a reduced order model for a spatial-stochastic weak formulation of the 
parameter-dependent BVP. In this scenario, the complexity of evaluating the SGRB model for a new value of the 
deterministic parameter only depends on the reduced dimension. 

To derive an SGRB model, we project the spatial-stochastic weak solution of a parameter-dependent SGFE model onto a POD 
reduced basis generated from snapshots of SGFE solutions at representative values of the parameter.~We propose 
residual-corrected estimates of the parameter-dependent expectation 
and variance of linear functional outputs and provide respective computable error bounds.~We test the SGRB method 
numerically for a convection-diffusion-reaction problem, choosing the convective velocity as a deterministic parameter 
and the parametrized reactivity field as a random input. Compared to a standard reduced basis model embedded in a Monte 
Carlo sampling procedure, the SGRB model requires a similar number of reduced basis functions to meet a given tolerance 
requirement. However, only a single run of the SGRB model suffices to estimate a statistical output for a new 
deterministic parameter value, while the standard reduced basis model must be solved for each Monte Carlo sample.

\end{abstract}


\begin{highlights}
\item ROMs to estimate parametrized statistics of functional outputs of elliptic PDEs.
\item Residual-corrected estimation of expectation and variance of statistical outputs.
\item Evaluation of stochastic Galerkin reduced basis (SGRB) model is sampling-free.
\item SGRB method achieves similar reduction of degrees of freedom as Monte Carlo RB.
\end{highlights}

\begin{keyword}
 Model order reduction \sep reduced basis method \sep stochastic Galerkin \sep finite elements \sep parametrized 
partial differential equation \sep proper orthogonal decomposition


\MSC 65C30 \sep 65N30 \sep 65N35 \sep 60H35 \sep 35R60
\end{keyword}

\end{frontmatter}


\section{Introduction}

\newcommand{\sampleSpace}{\Theta}
\newcommand{\sigmaAlgebra}{\mathcal F}
\newcommand{\probability}[1][]{\mathbb P_{#1}}
\newcommand{\randomVariable}[1][]{\xi_{#1}}
\newcommand{\density}[1][]{p_{#1}}
\newcommand{\E}[2][]{\mathbb E#1[#2#1]}
\newcommand{\V}[2][]{\mathbb V#1[#2#1]}

\newcommand{\dd}{\,\mathrm d}
\newcommand{\RR}{\mathbb R}
\newcommand{\NN}{\mathbb N}

\newcommand{\rand}[1][]{y_{#1}}
\newcommand{\iRand}{k}
\newcommand{\nRand}{K}
\newcommand{\dRand}[1][]{\Xi_{#1}}

\newcommand{\lebesgue}[3][2]{L^{#1}_{#2}(#3)}
\newcommand{\bochner}[3][]{\lebesgue{#1}{#2,#3}}
\newcommand{\banach}[4][]{\lebesgue[#1]{#2}{#3,#4}}

\newcommand{\hilbSPF}{X}
\newcommand{\hilbSDF}{\hilbSPF'}

\newcommand{\hilbSg}{S}
\newcommand{\hilbSgRed}{\hilbSg_{\nRb}}

\newcommand{\iSnap}{n}
\newcommand{\nSnap}{N}

\newcommand{\randVar}[1]{\randomVariable^{#1}}
\newcommand{\iRandVar}{\iSnap}
\newcommand{\nRandVar}{\nSnap_{\randomVariable}}
\newcommand{\dRandVar}{\dRand^{\nRandSnap}}

\newcommand{\randSnap}[1]{\rand^{#1}}
\newcommand{\iRandSnap}{\iSnap}
\newcommand{\nRandSnap}{\nSnap_{\randomVariable}}
\newcommand{\dRandSnap}{\dRand^{\nRandSnap}}

\newcommand{\para}[1][]{\mu_{#1}}
\newcommand{\iPara}{p}
\newcommand{\nPara}{P}
\newcommand{\dPara}{\mathcal P}

\newcommand{\paraSnap}[1]{\para^{#1}}
\newcommand{\iParaSnap}{\iSnap}
\newcommand{\nParaSnap}{\nSnap_{\para}^\text{train}}
\newcommand{\dParaSnap}{\dPara^{\nParaSnap}}

\newcommand{\nRb}{R}
\newcommand{\hilbSPR}[1][\nRb]{\hilbSPF^#1}

\newcommand{\uSPF}{u}
\newcommand{\uSPFpara}[1][\rand,\para]{\uSPF(#1)}
\newcommand{\uSPR}{\uSPF^{\nRb}}
\newcommand{\uSPRpara}[1][\rand,\para]{\uSPR(#1)}

\newcommand{\uWPF}{\bar u}
\newcommand{\uWPFpara}[1][\para]{\uWPF(#1)}
\newcommand{\uWPFparai}[2][\para]{\uWPF^{#2}(#1)}
\newcommand{\uWPR}{\uWPF^{\nRb}}
\newcommand{\uWPRpara}[1][\para]{\uWPR(#1)}

We consider linear elliptic boundary-value problems subject to a finite number of random and deterministic input parameters. Our goal is to compute the parameter-dependent expected value and variance of a functional output of interest. In this context, a reduced basis model provides a computationally inexpensive map between the deterministic input parameter and the corresponding output statistics. Moreover, it provides a computable a posteriori bound for the error between the reduced basis statistical estimate and a corresponding high-fidelity estimate. Reduced basis methods for linear elliptic boundary value problems with affinely parametrized \emph{deterministic} data are well-understood \cite{HesthavenEA2016,PrudhommeEA2002,QuarteroniEA2016}. We consider two approaches to include \emph{stochastic} parameters:
\begin{itemize}
  \item \emph{Monte Carlo reduced basis (MCRB) method:} The underlying equations are formulated weakly regarding the physical space, which means that the problem depends on both the deterministic and stochastic parameters.~Monte Carlo sampling is used to estimate the parameter-dependent expected value and variance of a functional output of interest. An MCRB method for linear elliptic problems with error bounds for the expectation and variance of a linear functional output is derived in \cite{BoyavalEA2009}. Improved error bounds are provided by \cite{HaasdonkEA2013}. Further advances are the introduction of a weighted error estimator \cite{ChenEA2013} and the embedding in a multi-level procedure \cite{Vidal-CodinaEA2016}. MCRB methods have also been applied to parabolic problems \cite{SpannringEA2018}, saddle point problems \cite{NewsumPowell2017}, Bayesian inverse problems \cite{Boyaval2012,ChenSchwab2016,ManzoniEA2016} and the assessment of rare events \cite{ChenQuarteroni2013}.
  \item \emph{Stochastic Galerkin reduced basis (SGRB) method:} The underlying equations are formulated weakly 
			  regarding the spatial and stochastic dimensions, so that the problem depends on the deterministic 
parameters only. Parameter-dependent estimates of the expected value and variance of a functional output are obtained 
by direct integration of the reduced solution.~The principle of SGRB methods is introduced in \cite{VenturiEA2008} for 
stochastic time-dependent incompressible Navier-Stokes problems, formulated weakly regarding the spatial and stochastic 
dimensions, with time acting as a parameter. Applications to linear dynamical systems are studied in 
\cite{FreitasEA2016,PulchMaten2015}. SGRB methods can be related to space-time reduced basis methods 
\cite{GlasEA2017,UrbanPatera2012}, which rely on a weak formulation with respect to space and time. The idea of using 
SGRB methods to estimate parameter-dependent expected values is discussed in \citep[section 8.2.1]{Wieland2013}.
\end{itemize}
Our main contribution is the derivation of a stochastic Galerkin reduced basis method to compute residual-corrected 
parameter-dependent estimates of the expectation and variance of statistical outputs together with corresponding error 
bounds.~A Monte Carlo reduced basis method will be used as a benchmark to assess the accuracy of the SGRB method and to 
compare the error bounds.

The creation of an SGRB model requires an underlying stochastic Galerkin finite element 
(SGFE) model or an equivalent high-fidelity Galerkin approximation. At least a few snapshots of the SGFE solution for 
different values of the deterministic parameter are needed to provide a suitable reduced basis. Moreover, evaluations of 
the SGFE linear and bilinear forms are necessary to derive the respective reduced-order Galerkin model and error bounds. 
We assume that the resources necessary for these computations are available in the setup phase.~We do not discuss the 
associated costs explicitly as it is not the focus of this work but refer to \cite{DexterEtAl2016} for a detailed 
analysis of the costs of stochatic Galerkin methods. We point out that the SGRB method presented in this paper is 
not a tool to reduce the computational burden associated with a single solution of an SGFE model as it is the objective 
in, e.g., \cite{Nouy2007,TamelliniEA2014} using proper generalized decomposition, in \cite{PowellEA2017,LeeElman2017} 
using a rational Krylov method and a low-rank tensor approximation, respectively, and in 
\cite{PowellElman2009,MuellerEtAl2019} using problem-tailored preconditioned iterative solvers. Instead, the SGRB 
approach targets the situation where a certain number of SGFE simulations are feasible in an expensive pre-processing 
step to create a reduced-order model (ROM) which can be evaluated cheaply for any given deterministic 
parameter. As an extreme scenario, one could imagine having supercomputer resources available in the 
setup phase whereas the ROM shall be evaluated on a microcontroller in real time. Therefore, SGRB models can be 
particularly useful in settings where statistical estimates are required for many values of the deterministic parameter, 
like in robust optimal control or the real-time exploration of parameter-dependent statistics. 

Compared to Monte Carlo reduced basis methods, stochastic Galerkin reduced basis methods can substantially decrease the 
computational cost of estimating the expectation and variance for a given deterministic parameter value. The reason is 
that MCRB methods require sampling the reduced-order solution, which may lead to a large number of reduced-order 
simulations for a single query of the deterministic parameter. The same issue arises when a stochastic 
collocation method is applied instead of Monte Carlo \cite{ElmanLiao2013}. SGRB methods overcome this drawback by 
evaluating the stochastic integrals in the offline stage, i.e., during the setup of the ROM. As a 
result, the cost of solving an SGRB model is similar to the cost of a single solution of a comparable RB model within an 
MC loop. At the same time, the SGRB model directly delivers a statistical estimate without sampling. Therefore, one can 
expect a speed-up factor in the order of magnitude of the number of MC samples.

\section{Monte Carlo reduced basis method} \label{sec:mcrb}

\newcommand{\MC}[2][]{E#1[#2#1]}
\newcommand{\MCV}[2][]{V#1[#2#1]}
\newcommand{\MCM}[2][]{\underline E#1[#2#1]}

We introduce a complete probability space $(\sampleSpace,\sigmaAlgebra,\probability)$ consisting of a set $\sampleSpace$ of elementary events, a $\sigma$-algebra $\sigmaAlgebra$ on $\sampleSpace$ and a probability measure $\probability$ on $\sigmaAlgebra$. For $\iRand=1,\dots,\nRand$ with $\nRand\in\NN$, we define independent random variables $\randomVariable[\iRand]\colon \sampleSpace\rightarrow \dRand[\iRand]$, where $\dRand[\iRand]\subset\RR$ is the image of $\randomVariable[\iRand]$. We introduce respective probability distributions  $\probability[{\randomVariable[\iRand]}]$ and probability densities $\density[{\randomVariable[\iRand]}]:\dRand[\iRand]\rightarrow\RR^+$, so that $\probability[{\randomVariable[\iRand]}](B) = \int_B\density[{\randomVariable[\iRand]}](\rand)\dd \rand = \probability(\randomVariable[\iRand]^{-1}(B))$ for all $B$ in the Borel $\sigma$-algebra of $\dRand[\iRand]$. We collect the random variables in a random vector  $\randomVariable\colon\sampleSpace\rightarrow\dRand$, where $\randomVariable=(\randomVariable[1],\dots,\randomVariable[\nRand])^T$ and $\dRand=\dRand[1] \times \dots \times \dRand[\nRand]$, with joint distribution $\probability[\randomVariable]$ and density $\density[{\randomVariable}]:\dRand\rightarrow\RR^+$. We denote the expectation of any $\probability[\randomVariable]$-measurable function $g\colon\dRand\rightarrow\RR$ with density $\density[{\randomVariable}]$ by $\E{g}=\int_{\dRand} g(\rand)\dd\probability[\randomVariable](\rand)=\int_{\dRand} g(\rand)\density[{\randomVariable}](\rand)\dd \rand$. We define the variance $\V{g}=\E{(g-\E{g})^2}$ for any $g\in\lebesgue{\randomVariable}{\dRand}$, where $\lebesgue{\randomVariable}{\dRand}:=\{\,v:\dRand\rightarrow\RR\mid\int_{\dRand} v(\rand)^2\density[\randomVariable](\rand) \dd\rand<\infty\,\}$.

We introduce a deterministic parameter $\para\in\dPara$ for some parameter domain $\dPara\subset\RR^{\nPara}$ with $\nPara\in\NN$. The final statistical outputs are scalar-valued $\para$-dependent functions representing approximations to the expectation and variance of a linear functional of a PDE solution.

We let $\{\randVar{1},\dots,\randVar{\nRandSnap}\}$ be a set of independent copies of the random vector $\randomVariable$. For some $g\in\lebesgue{\randomVariable}{\dRand}$, we define Monte Carlo estimators
\begin{equation} \label{eq:MonteCarlo}
  \MC{g} := \frac1\nRandSnap\sum_{\iRandSnap=1}^{\nRandSnap}g(\randVar{\iRandSnap}),\quad
  \MCM{g} := \frac1{\nRandSnap-1}\sum_{\iRandSnap=1}^{\nRandSnap}g(\randVar{\iRandSnap}),\quad
  \MCV{g} := \MCM{g^2}-\MCM{g}\MC{g},
\end{equation}
for which $\E{\MC{g}}=\E{g}$ and $\E{\MCV{g}}=\V{g}$ hold. We let $\dRandSnap:=\{\randSnap{1},\dots,\randSnap{\nRandSnap}\}$ be a realization of $\{\randVar{1},\dots,\randVar{\nRandSnap}\}$. A realization of a Monte Carlo estimate is obtained after substituting $\randVar{\iRandSnap}$ by $\randSnap{\iRandSnap}$ in \eqref{eq:MonteCarlo}, assuming that $g(\rand)$ is computable for any $\rand\in\dRandSnap$. In our approach, we view $\nRandSnap$ as a discretization parameter and fix $\dRandSnap$ before we build the reduced basis model.

We focus on the case where $g$ depends on its argument via a discretized PDE problem. In the following, we provide a full-order model (\cref{sec:mcrbFeModel}) and a reduced-order model (\cref{sec:mcrbRbModel}) to approximate the solution of the PDE for a given realization of the deterministic and random input parameters. The computation of linear outputs and the corresponding statistics are described in \cref{sec:mcrbEstimates}, together with the respective error bounds. For the separation of the computation into an expensive offline phase and a inexpensive online phase, we refer to \cite{BoyavalEA2009,HaasdonkEA2013}.

\subsection{Monte Carlo finite element model} \label{sec:mcrbFeModel}

\newcommand{\nFe}{M_\text{FE}}
\newcommand{\iFe}{m}

\newcommand{\uTest}{v}
\newcommand{\uTrial}{w}

\newcommand{\aS}{a}
\newcommand{\aSpara}[3][\rand,\para]{\aS(#2,#3;#1)}

\newcommand{\fS}{f}
\newcommand{\fSpara}[2][\rand,\para]{\fS(#2;#1)}

\newcommand{\lSP}{l}
\newcommand{\lSPpara}[2][\rand,\para]{\lSP(#2;#1)}
\newcommand{\lSD}[1]{\lSP_{(#1)}}

We use a stochastic strong form of a pa\-ra\-me\-tri\-zed PDE problem with random data to formulate an MCFE model. Samples of the solution of the PDE problem are characterized by a separable Hilbert space $\hilbSPF$ with inner product $(\cdot,\cdot)_\hilbSPF$ and norm $\|\cdot\|_\hilbSPF$. We introduce a parametrized bilinear form $\aSpara{\cdot}{\cdot}:\hilbSPF\times\hilbSPF\rightarrow\RR$ as well as a parametrized linear form $\fSpara{\cdot}:\hilbSPF\rightarrow\RR$. This allows a stochastic strong formulation of a linear elliptic PDE problem:
\begin{problem}[MCFE model]\label{problem:strong}
For given $(\rand,\para)\in\dRandSnap\times\dPara$, find
\begin{equation}
  \uSPFpara\in\hilbSPF\;\colon\quad\aSpara{\uSPFpara}{\uTest} = \fSpara{\uTest}\quad\forall \uTest\in \hilbSPF.\label{eq:strongPrimal}
\end{equation}
\end{problem}
We assume that $\aSpara{\cdot}{\cdot}$ is $(\rand,\para)$-uniformly bounded and coercive on $\hilbSPF$ and that $\fSpara{\cdot}$ is $(\rand,\para)$-uniformly bounded on $\hilbSPF$. Then \cref{problem:strong} has a unique solution for any given $(\rand,\para)\in\dRandSnap\times\dPara$ according to the Lax-Milgram lemma.

It is a usual premise in reduced basis methods that the discretization space of the underlying full-order model is 
assumed to be large enough to capture the solution with sufficient precision. The assessment of the error of the 
full-order solution with respect to the infinite-dimensional exact solution is delegated to the choice of the full-order 
discretization. In this spirit, we assume $\hilbSPF$ to be a sufficiently well-resolving finite element space with 
$\nFe$ degrees of freedom. Similarly, we assume $\dRandSnap$ to be a large enough sample set, so that the error 
associated with the MC sampling is sufficiently small. Consequently, the errors associated with the MC sampling and the 
FE discretization are not represented in our error estimates.

\subsection{Monte Carlo reduced basis model} \label{sec:mcrbRbModel}

Let $\hilbSPR\subset\hilbSPF$ be an $\nRb$-dimensional subspace. An example is given in \cref{sec:spacesMcrb}. A reduced-order model of \cref{problem:strong} is
\begin{problem}[MCRB model]\label{problem:reducedStrong}
For given $(\rand,\para)\in\dRandSnap\times\dPara$, find
\[
  \uSPRpara\in\hilbSPR\;\colon\quad\aSpara{\uSPRpara}{\uTest} = \fSpara{\uTest}\quad\forall \uTest\in \hilbSPR.
\]
\end{problem}
The unique solvability of \cref{problem:reducedStrong} is a direct consequence of \cref{problem:strong} being well-posed and $\hilbSPR$ being a subspace of $\hilbSPF$.

\subsection{Output statistics and error estimates} \label{sec:mcrbEstimates}

\newcommand{\eP}{e}

\newcommand{\uSDF}[1]{u_{(#1)}}
\newcommand{\uSDR}[1]{\uSDF{#1}^{\nRb}}
\newcommand{\uSDFpara}[2][\rand,\para]{\uSDF{#2}(#1)}
\newcommand{\uSDRpara}[2][\rand,\para]{\uSDR{#2}(#1)}

\newcommand{\hilbSDR}[1][i]{{\hilbSPF^{\nRb}_{(#1)}}}

\newcommand{\rSP}{r}
\newcommand{\rSD}[1]{\rSP_{(#1)}}

\newcommand{\coerS}{\alpha}
\newcommand{\coerSpara}[1][\rand,\para]{\coerS(#1)}

We derive residual-corrected RB approximations of MCFE estimates of the expectation and variance of linear outputs of the parametrized PDE problem. We provide error bounds converging quadratically in terms of residual norms. In particular, we transfer the dual-based error bounds of \cite{HaasdonkEA2013}, considering the \emph{true} expectation and variance of RB outputs, to the setting of \cite{BoyavalEA2009}, considering \emph{MC approximations} of the expectation and variance. This requires an additional dual problem as well as a careful handling of different MC discretizations of the expected value, namely $\MC{\cdot}$ and $\MCM{\cdot}$ according to \eqref{eq:MonteCarlo}. Throughout this section, we assume the same dependency on the deterministic and stochastic parameters as in \cref{sec:mcrbFeModel,sec:mcrbRbModel}, but often omit an explicit notation of the parameter dependence for clarity.

We introduce a parametrized linear form $\lSPpara{\cdot}:\hilbSPF\rightarrow\RR$, assumed to be $(\rand,\para)$-uniformly bounded on $\hilbSPF$. We complement \cref{problem:strong,problem:reducedStrong} with auxiliary sets of dual problems to allow for residual-corrected output computations. For brevity, we provide the definitions and problems all at once, which results in some interconnections between the following statements:

\begin{definition}
  Subspaces $\hilbSDR[1],\dots,\hilbSDR[4]$ and linear forms $\lSD{1},\dots,\lSD{4}$ are given by
  \begin{align*}
    \hilbSDR[1]&\subset\hilbSPF,&\lSD{1}(\cdot)&:=\lSP(\cdot),\\
    \hilbSDR[2]&\subset\hilbSPF,&\lSD{2}(\cdot)&:=2(\lSP(\uSPR)-\rSP(\uSDR{1}))\lSP(\cdot),\\
    \hilbSDR[3]&\subset\hilbSPF,&\lSD{3}(\cdot)&:=\MC{\lSP(\uSPR)-\rSP(\uSDR{1})}\lSP(\cdot),\\
    \hilbSDR[4]&\subset\hilbSPF,&\lSD{4}(\cdot)&:=\MCM{\lSP(\uSPR)-\rSP(\uSDR{1})}\lSP(\cdot).
  \end{align*}
\end{definition}

\begin{problem}[dual MCFE models] \label{problem:strongDual}
For given $(\rand,\para)\in\dRandSnap\times\dPara$, find
\begin{alignat*}{4}
  \uSDF{i}\in \hilbSPF&\;\colon\quad&\aS(\uTest,\uSDF{i})&=-\lSD{i}(\uTest)&\quad&\forall \uTest \in \hilbSPF,&\quad&i=1,\dots,4.
\end{alignat*}
\end{problem}

\begin{problem}[dual MCRB models] \label{problem:strongReducedDual}
For given $(\rand,\para)\in\dRandSnap\times\dPara$, find
\begin{alignat*}{4}
  \uSDR{i}\in \hilbSDR&\;\colon\quad&\aS(\uTest,\uSDR{i})&=-\lSD{i}(\uTest)&\quad&\forall \uTest \in \hilbSDR,&\quad&i=1,\dots,4.
\end{alignat*}
\end{problem}

\begin{definition} \label{definition:strongResidual}
  A primal residual $\rSP$ and dual residuals $\rSD{1},\dots,\rSD{4}$ are  given by
  \begin{alignat}{5}
    \rSP(\cdot) &:= \fS(\cdot) - \aS(\uSPR,\cdot),\label{eq:residualStrong}\\
    \rSD{i}(\cdot) &:= -\lSD{i}(\cdot) - \aS(\cdot,\uSDR{i}),&\quad&i=1,\dots4.\label{eq:residualStrongDual}
  \end{alignat}
\end{definition}
The following error bounds require a coercivity factor
\begin{equation} \label{eq:coercivityStrong}
  \coerSpara := \inf_{\uTest\in\hilbSPF\setminus\{0\}}\frac{\aSpara{\uTest}{\uTest}}{\|\uTest\|_{\hilbSPF}^2}\quad\forall(\rand,\para)\in\dRandSnap\times\dPara,
\end{equation}
and a dual space $\hilbSDF$ of $\hilbSPF$, with norm
\begin{equation} \label{eq:dualNormStrong}
  \|F\|_{\hilbSDF} := \sup_{v\in \hilbSPF\setminus\{0\}}\frac{|F(v)|}{\|v\|_{\hilbSPF}}\quad\forall F\in \hilbSDF.
\end{equation}
For efficiency, an offline/online decomposition of the dual norms of the encountered functional is possible and the coercivity factor can be replaced by a strictly positive lower bound \cite{BoyavalEA2009,HaasdonkEA2013}.

First, we provide a bound for the error of the RB solution to \cref{problem:reducedStrong} with respect to the FE 
solution to \cref{problem:strong}, point-wise in $\dRandSnap\times\dPara$, see \citep[Proposition 3.1]{HaasdonkEA2013}:
\begin{lemma}[solution bound]\label{theorem:errorReducedStrong}
  For given $(\rand,\para)\in\dRandSnap\times\dPara$,
  \begin{align} \label{eq:errorReducedStrong}
    \|\uSPF-\uSPR\|_\hilbSPF \leq \frac{\|\rSP\|_{\hilbSDF}}{\coerS}.
  \end{align}
\end{lemma}
\begin{proof} We define $\eP := \uSPF-\uSPR$ and derive
  \[
    \coerS\|\eP\|_{\hilbSPF}^2 \stackrel{\eqref{eq:coercivityStrong}}{\leq} \aS(\eP,\eP) \stackrel{\eqref{eq:strongPrimal}}{=} \fS(\eP) - \aS(\uSPR,\eP) \stackrel{\eqref{eq:residualStrong}}{=} \rSP(\eP) \stackrel{\eqref{eq:dualNormStrong}}{\leq} \|\rSP\|_{\hilbSDF}\|\eP\|_{\hilbSPF}.
  \]
  Dividing by $\coerS\|\eP\|_{\hilbSPF}$ gives the result.
\end{proof}

We approximate a parameter-dependent linear output $\lSPpara{\uSPFpara}$ point-wise with a residual-corrected 
reduced-order approximation, see \citep[Theorem 3.6]{HaasdonkEA2013}:
\begin{lemma}[output bound] \label{theorem:functionalBoundsDualBased}
  For given $(\rand,\para)\in\dRandSnap\times\dPara$,
  \begin{align} \label{eq:functionalBoundsDualBased}
    |\lSP(\uSPF)-\lSP(\uSPR) + \rSP(\uSDR{1})| &\leq\frac{\|\rSP\|_{\hilbSDF} \|\rSD{1}\|_{\hilbSDF}}{\coerS}.
  \end{align}
\end{lemma}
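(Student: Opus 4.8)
The plan is to reduce the quantity inside the absolute value to a single bilinear form evaluated at the primal and dual errors, and then to bound that form by a product of residual dual norms through two applications of the coercivity argument already used in \cref{theorem:errorReducedStrong}.

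First I would set $\eP := \uSPF - \uSPR$ and introduce the dual error $\eP_{(1)} := \uSDF{1} - \uSDR{1}$. By linearity $\lSP(\uSPF) - \lSP(\uSPR) = \lSP(\eP)$, and since $\lSD{1} = \lSP$, testing the dual MCFE model (\cref{problem:strongDual} with $i=1$) against $\uTest = \eP \in \hilbSPF$ yields $\aS(\eP, \uSDF{1}) = -\lSP(\eP)$, so that $\lSP(\uSPF) - \lSP(\uSPR) = -\aS(\eP, \uSDF{1})$.

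Next I would rewrite the residual correction. By \eqref{eq:residualStrong} we have $\rSP(\uSDR{1}) = \fS(\uSDR{1}) - \aS(\uSPR, \uSDR{1})$, and since $\uSDR{1} \in \hilbSDR[1] \subset \hilbSPF$ we may test the primal MCFE model \eqref{eq:strongPrimal} against $\uSDR{1}$ to replace $\fS(\uSDR{1})$ by $\aS(\uSPF, \uSDR{1})$, giving $\rSP(\uSDR{1}) = \aS(\eP, \uSDR{1})$. Adding the two identities produces the key algebraic reduction
\[
  \lSP(\uSPF) - \lSP(\uSPR) + \rSP(\uSDR{1}) = -\aS(\eP, \uSDF{1}) + \aS(\eP, \uSDR{1}) = -\aS(\eP, \eP_{(1)}),
\]
in which the first-order contribution to the output error is cancelled by the correction, leaving only a product of errors.

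Finally I would bound $|\aS(\eP, \eP_{(1)})|$ by residual norms. Combining \eqref{eq:strongPrimal} and \eqref{eq:residualStrong} shows that $\rSP(\uTest) = \aS(\eP, \uTest)$ for all $\uTest \in \hilbSPF$, so $\aS(\eP, \eP_{(1)}) = \rSP(\eP_{(1)})$ and $|\aS(\eP, \eP_{(1)})| \leq \|\rSP\|_{\hilbSDF} \|\eP_{(1)}\|_\hilbSPF$ by \eqref{eq:dualNormStrong}. It remains to bound the dual error, the step most analogous to \cref{theorem:errorReducedStrong}: combining the dual MCFE model with \eqref{eq:residualStrongDual} gives $\rSD{1}(\uTest) = \aS(\uTest, \eP_{(1)})$, so coercivity \eqref{eq:coercivityStrong} yields $\coerS \|\eP_{(1)}\|_\hilbSPF^2 \leq \aS(\eP_{(1)}, \eP_{(1)}) = \rSD{1}(\eP_{(1)}) \leq \|\rSD{1}\|_{\hilbSDF} \|\eP_{(1)}\|_\hilbSPF$, hence $\|\eP_{(1)}\|_\hilbSPF \leq \|\rSD{1}\|_{\hilbSDF} / \coerS$. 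Chaining the estimates gives the claim. I expect the main obstacle to be purely bookkeeping: because $\aS$ need not be symmetric one must keep careful track of which argument each dual solution occupies, and one must invoke the inclusion $\hilbSDR[1] \subset \hilbSPF$ so that the full-order primal equation can legitimately be tested against the reduced-order dual function $\uSDR{1}$.
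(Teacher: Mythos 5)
Your proof is correct, and it reaches the paper's bound along the adjoint-symmetric route rather than the paper's own. The paper's proof never touches the full-order dual solution $\uSDF{1}$ at all: it rewrites the corrected output error in three equalities as $|\rSD{1}(\eP)|$, using only the residual definitions \eqref{eq:residualStrong}, \eqref{eq:residualStrongDual} and the primal equation \eqref{eq:strongPrimal}, and then closes with $|\rSD{1}(\eP)|\leq\|\rSD{1}\|_{\hilbSDF}\|\eP\|_{\hilbSPF}$ and the already-proven primal solution bound \eqref{eq:errorReducedStrong}. You instead introduce the dual error $\eP_{(1)}=\uSDF{1}-\uSDR{1}$, establish the primal--dual cancellation identity $\lSP(\uSPF)-\lSP(\uSPR)+\rSP(\uSDR{1})=-\aS(\eP,\eP_{(1)})$, factor through the primal residual acting on the dual error, and prove inline a dual analogue of \cref{theorem:errorReducedStrong}, namely $\|\eP_{(1)}\|_{\hilbSPF}\leq\|\rSD{1}\|_{\hilbSDF}/\coerS$, by re-running the coercivity argument with the dual solution in the second slot of $\aS$. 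The two factorizations are two readings of the same bilinear quantity, since $\rSD{1}(\eP)=\aS(\eP,\eP_{(1)})=\rSP(\eP_{(1)})$ exactly; the paper bounds it as (dual residual)$\times$(primal error), you as (primal residual)$\times$(dual error), and both products collapse to the identical right-hand side $\|\rSP\|_{\hilbSDF}\|\rSD{1}\|_{\hilbSDF}/\coerS$. What your route buys: it makes explicit the classical quadratic error-cancellation structure (the corrected output error is a product of primal and dual errors, which is precisely why the bound converges quadratically in residual norms, as the paper remarks), and it yields the dual solution bound as a reusable byproduct. The cost is that you must invoke \cref{problem:strongDual} and the existence of $\uSDF{1}$, which the paper's shorter chain avoids entirely. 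Your bookkeeping is sound throughout: you correctly keep the dual solutions in the second argument of the non-symmetric $\aS$, and you correctly use the inclusions $\hilbSDR[1]\subset\hilbSPF$ and $\eP,\eP_{(1)}\in\hilbSPF$ where the full-order equations are tested against reduced-order or error functions.
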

\begin{proof} We define $\eP := \uSPF-\uSPR$ and reformulate
  \begin{align*}
    &|\lSP(\uSPF)-\lSP(\uSPR) + \rSP(\uSDR{1})| \stackrel{\eqref{eq:residualStrong}}{=} |\lSP(\eP) + \fS(\uSDR{1}) - \aS(\uSPR,\uSDR{1})|
    \stackrel{\eqref{eq:strongPrimal}}{=} |\lSP(\eP)+\aS(\eP,\uSDR{1})|\\
    &\qquad \stackrel{\eqref{eq:residualStrongDual}}{=} |\rSD{1}(\eP)| \stackrel{\eqref{eq:dualNormStrong}}{\leq} \|\rSD{1}\|_{\hilbSDF}\|\eP\|_{\hilbSPF}
    \stackrel{\eqref{eq:errorReducedStrong}}{\leq} \frac{\|\rSD{1}\|_{\hilbSDF}\|\rSP\|_{\hilbSDF}}{\coerS}.
  \end{align*}
\end{proof}

We approximate the MCFE estimate $\MC{\lSPpara[\cdot,\para]{\uSPFpara[\cdot,\para]}}$ of the parameter-dependent 
expected linear output as follows, see \citep[Corollary 4.2.]{HaasdonkEA2013}:
\begin{theorem}[expected output bound] \label{theorem:expectationBoundsStrong} For given $\para\in\dPara$,
  \begin{align*}
    &|\MC{\lSP(\uSPF)}-\MC{\lSP(\uSPR)} + \MC{\rSP(\uSDR{1})}| \leq\MC[\Bigg]{\frac{\|\rSP\|_{\hilbSDF} \|\rSD{1}\|_{\hilbSDF}}{\coerS}}.
  \end{align*}
\end{theorem}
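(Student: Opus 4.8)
The plan is to reduce the statement to the pointwise output bound of \cref{theorem:functionalBoundsDualBased} by exploiting that the Monte Carlo estimator $\MC{\cdot}$ defined in \eqref{eq:MonteCarlo} is a finite, equally weighted average and hence a linear and monotone operator acting on the stochastic argument, with $\para$ held fixed throughout. First I would use linearity of the averaging to merge the three estimators on the left-hand side into a single one,
\[
  \MC{\lSP(\uSPF)}-\MC{\lSP(\uSPR)} + \MC{\rSP(\uSDR{1})} = \MC{\lSP(\uSPF)-\lSP(\uSPR) + \rSP(\uSDR{1})},
\]
where every quantity inside the bracket on the right is understood to be evaluated at $(\randSnap{\iRandSnap},\para)$ for the respective summand $\iRandSnap=1,\dots,\nRandSnap$. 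Next I would apply the triangle inequality to move the modulus inside the finite sum, which yields
\[
  |\MC{\lSP(\uSPF)-\lSP(\uSPR) + \rSP(\uSDR{1})}| \leq \MC{|\lSP(\uSPF)-\lSP(\uSPR) + \rSP(\uSDR{1})|}.
\]

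I would then invoke \cref{theorem:functionalBoundsDualBased}, which is valid at every $(\randSnap{\iRandSnap},\para)\in\dRandSnap\times\dPara$, to bound each summand by the corresponding value of $\|\rSP\|_{\hilbSDF}\|\rSD{1}\|_{\hilbSDF}/\coerS$. Since the averaging weights $1/\nRandSnap$ are non-negative, this pointwise inequality is preserved by $\MC{\cdot}$, so that
\[
  \MC{|\lSP(\uSPF)-\lSP(\uSPR) + \rSP(\uSDR{1})|} \leq \MC[\Bigg]{\frac{\|\rSP\|_{\hilbSDF}\|\rSD{1}\|_{\hilbSDF}}{\coerS}},
\]
which is exactly the asserted right-hand side. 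Chaining the three displays gives the claim.

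There is no genuine analytical obstacle here: the whole argument is monotonicity of a finite average combined with the already-established pointwise estimate. The only point demanding care is the bookkeeping, namely that within each summand the primal solution $\uSPF$, the reduced solution $\uSPR$, the dual reduced solution $\uSDR{1}$, the residual $\rSP$, and the coercivity factor $\coerS$ are all evaluated at the same realization $\randSnap{\iRandSnap}$. I would also note that, unlike the variance estimate to come, the same estimator $\MC{\cdot}$ appears on both sides and no mixing of $\MC{\cdot}$ and $\MCM{\cdot}$ is involved, so the linearity step is immediate.
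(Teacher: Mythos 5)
Your proposal is correct and follows essentially the same route as the paper's proof: the paper likewise merges the three estimators by linearity of $\MC{\cdot}$, moves the modulus inside the average (which it labels Jensen's inequality, your triangle-inequality step for a finite equally weighted sum), and then applies the pointwise bound \eqref{eq:functionalBoundsDualBased} summand by summand using monotonicity of the average. Your added remark on evaluating all quantities at the same realization $\randSnap{\iRandSnap}$ is sound bookkeeping that the paper leaves implicit.
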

\begin{proof}
  By Jensen's inequality
  \[
    |\MC{\lSP(\uSPF)}-\MC{\lSP(\uSPR)}+\MC{\rSP(\uSDR{1})}| 
    \leq \MC{|\lSP(\uSPF)-\lSP(\uSPR)+\rSP(\uSDR{1})|} 
    \stackrel{\eqref{eq:functionalBoundsDualBased}}{\leq}\MC[\Bigg]{\frac{\|\rSP\|_{\hilbSDF} \|\rSD{1}\|_{\hilbSDF}}{\coerS}}.
  \]
\end{proof}

Finally, we approximate the MCFE estimate $\MCV{\lSPpara[\cdot,\para]{\uSPFpara[\cdot,\para]}}$ of the 
parameter-dependent variance of the linear output, see \citep[Theorem 4.5]{HaasdonkEA2013}:
\begin{theorem}[output variance bound]\label{theorem:varianceBoundsDualBased} For given $\para\in\dPara$,
  \begin{align*}
    & \big| 
        \MCV{\lSP(\uSPF)}
      - \MCV{\lSP(\uSPR)}
      + \MCV{\rSP(\uSDR{1})}
      + \MCM{\rSP(\uSDR{2})}
      - \MCM{\rSP(\uSDR{3})}
      - \MC{\rSP(\uSDR{4})}
      \big|\\
    &\qquad
    \leq 
      \MCM[\Bigg]{
        \frac{
          \|\rSP\|_{\hilbSDF}^2 \|\rSD{1}\|_{\hilbSDF}^2
        }{
          \coerS^2
        }
      }
    + \MC[\Bigg]{
        \frac{
          \|\rSP\|_{\hilbSDF} \|\rSD{1}\|_{\hilbSDF}
        }{
          \coerS
        }
      }
      \MCM[\Bigg]{
        \frac{
          \|\rSP\|_{\hilbSDF} \|\rSD{1}\|_{\hilbSDF}
        }{
          \coerS
        }
      }\\
    &\qquad\qquad
    + \MCM[\Bigg]{
        \frac{
          \big\|
            \rSD{2} 
          - \rSD{3} 
          - \frac{\nRandSnap-1}{\nRandSnap}\rSD{4}
          \big\|_{\hilbSDF}
          \big\|
            \rSP
          \big\|_{\hilbSDF}
        }{
          \coerS
        }
      }.
  \end{align*}
\end{theorem}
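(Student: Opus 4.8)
The plan is to lift the pointwise output bound of \cref{theorem:functionalBoundsDualBased} to the quadratic variance estimator by expanding the exact variance around the reduced output and letting the four dual problems absorb the first-order error contributions. I write $\eP := \uSPF - \uSPR$, abbreviate $\rho_i := \rSP(\uSDR{i})$, and record two ingredients. The first is the elementary bilinear expansion, immediate from \eqref{eq:MonteCarlo}: for $g,h \in \lebesgue{\randomVariable}{\dRand}$,
\[
  \MCV{g+h} = \MCV{g} + \MCV{h} + C(g,h), \qquad C(g,h) := 2\MCM{gh} - \MCM{g}\MC{h} - \MCM{h}\MC{g},
\]
with $C$ symmetric. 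The second is a dual identity: testing \eqref{eq:residualStrongDual} with $\eP$ and using $\rSP(v) = \aS(\eP,v)$ gives, for each $i$,
\[
  \lSD{i}(\eP) + \rho_i = -\rSD{i}(\eP), \qquad |\rSD{i}(\eP)| \leq \frac{\|\rSD{i}\|_{\hilbSDF}\,\|\rSP\|_{\hilbSDF}}{\coerS},
\]
where the bound follows from \eqref{eq:dualNormStrong} and \cref{theorem:errorReducedStrong}. For $i=1$ this is exactly \cref{theorem:functionalBoundsDualBased}, so $d := \lSP(\eP) + \rho_1 = -\rSD{1}(\eP)$ is second order in the residual norms.

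Inserting $\lSP(\uSPF) = \lSP(\uSPR) + \lSP(\eP)$ into the expansion gives
\[
  \MCV{\lSP(\uSPF)} = \MCV{\lSP(\uSPR)} + \MCV{\lSP(\eP)} + C(\lSP(\uSPR),\lSP(\eP)),
\]
and the first-order term $C(\lSP(\uSPR),\lSP(\eP))$ is what the duals must rewrite. I would treat its three constituents separately, writing $\hat\ell := \lSP(\uSPR) - \rho_1$ so that $\lSD{2}(\eP) = 2\hat\ell\,\lSP(\eP)$, $\lSD{3}(\eP) = \MC{\hat\ell}\,\lSP(\eP)$ and $\lSD{4}(\eP) = \MCM{\hat\ell}\,\lSP(\eP)$. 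The first constituent obeys $2\MCM{\lSP(\uSPR)\lSP(\eP)} = \MCM{\lSD{2}(\eP)} + 2\MCM{\rho_1\lSP(\eP)}$, and I substitute $\MCM{\lSD{2}(\eP)} = -\MCM{\rho_2} - \MCM{\rSD{2}(\eP)}$ from the dual identity. The two crossed constituents are where the Monte Carlo operators must be disentangled: $-\MCM{\lSP(\uSPR)}\MC{\lSP(\eP)}$ matches $\lSD{4}$ (whose coefficient $\MCM{\hat\ell}$ carries the operator $\MCM{\cdot}$) evaluated under $\MC{\cdot}$, while $-\MCM{\lSP(\eP)}\MC{\lSP(\uSPR)}$ matches $\lSD{3}$ (coefficient $\MC{\hat\ell}$) evaluated under $\MCM{\cdot}$. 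Splitting $\MCM{\lSP(\uSPR)} = \MCM{\hat\ell} + \MCM{\rho_1}$ and $\MC{\lSP(\uSPR)} = \MC{\hat\ell} + \MC{\rho_1}$ and applying the dual identity turns these into $\MC{\rho_4} + \MC{\rSD{4}(\eP)}$ and $\MCM{\rho_3} + \MCM{\rSD{3}(\eP)}$ plus lower-order remainders proportional to $\rho_1$.

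Collecting terms, the explicit corrections $-\MCM{\rho_2} + \MCM{\rho_3} + \MC{\rho_4}$ produced above cancel the $+\MCM{\rho_2} - \MCM{\rho_3} - \MC{\rho_4}$ on the left of the claim, while the three $\rho_1$-remainders reassemble into $C(\rho_1,\lSP(\eP))$. Since $\MCV{\lSP(\eP)} + \MCV{\rho_1} + C(\rho_1,\lSP(\eP)) = \MCV{\lSP(\eP)+\rho_1} = \MCV{d}$, the entire left-hand side collapses to
\[
  \MCV{d} - \MCM{\rSD{2}(\eP)} + \MCM{\rSD{3}(\eP)} + \MC{\rSD{4}(\eP)}.
\]
Using $\MC{\rSD{4}(\eP)} = \tfrac{\nRandSnap-1}{\nRandSnap}\MCM{\rSD{4}(\eP)}$, the last three terms combine into $-\MCM[\big]{(\rSD{2} - \rSD{3} - \tfrac{\nRandSnap-1}{\nRandSnap}\rSD{4})(\eP)}$, which \eqref{eq:dualNormStrong} and \cref{theorem:errorReducedStrong} bound by the third right-hand term. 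For the remainder I would use $|\MCV{d}| \leq \MCM{d^2} + \MCM{|d|}\MC{|d|}$ together with $|d| \leq \|\rSD{1}\|_{\hilbSDF}\|\rSP\|_{\hilbSDF}/\coerS$ from \cref{theorem:functionalBoundsDualBased} to recover the first two right-hand terms.

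The main obstacle is exactly the middle step: keeping each Monte Carlo average attached to the correct dual problem under the crossing $\lSD{3}\leftrightarrow\MCM{\cdot}$, $\lSD{4}\leftrightarrow\MC{\cdot}$, and verifying that every first-order-in-$\rho_1$ piece reorganizes precisely into $C(\rho_1,\lSP(\eP))$ so that the collapse to $\MCV{d}$ is an identity rather than an approximation. This careful handling of two averaging operators is what distinguishes the present argument from the true-moment setting of \cite{HaasdonkEA2013}, where a single operator appears.
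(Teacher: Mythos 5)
Your proposal is correct and takes essentially the same route as the paper's proof: you use the identical dual identities $-\rSD{i}(\eP)=\lSD{i}(\eP)+\rSP(\uSDR{i})$ (obtained from $\rSP(\cdot)=\aS(\eP,\cdot)$), the identical regrouping of the left-hand side into $\MCM{d^2}-\MCM{d}\MC{d}$ with $d=\lSP(\eP)+\rSP(\uSDR{1})$ plus the combined residual term, the same use of $\MC{g}=\frac{\nRandSnap-1}{\nRandSnap}\MCM{g}$ to produce the factor $\frac{\nRandSnap-1}{\nRandSnap}$ in front of $\rSD{4}$, and the same final estimates via \cref{theorem:functionalBoundsDualBased} and \cref{theorem:errorReducedStrong}. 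The only difference is presentational: you derive the regrouping constructively through the bilinear expansion $\MCV{g+h}=\MCV{g}+\MCV{h}+C(g,h)$ and the collapse $\MCV{\lSP(\eP)}+\MCV{\rSP(\uSDR{1})}+C(\rSP(\uSDR{1}),\lSP(\eP))=\MCV{d}$, whereas the paper states the regrouped expression at once and verifies it by direct substitution.
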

\begin{proof}
  By \eqref{eq:MonteCarlo}, defining $\eP := \uSPF-\uSPR$,
  \begin{align*}
    & \big|
        \MCM{\lSP(\uSPF)^2}
      - \MCM{\lSP(\uSPF)}\MC{\lSP(\uSPF)}
      - \MCM{\lSP(\uSPR)^2}
      + \MCM{\lSP(\uSPR)}\MC{\lSP(\uSPR)} \\&\qquad
      + \MCM{\rSP(\uSDR{1})^2}
      - \MCM{\rSP(\uSDR{1})}\MC{\rSP(\uSDR{1})}
      + \MCM{\rSP(\uSDR{2})}
      - \MCM{\rSP(\uSDR{3})}
      - \MC{\rSP(\uSDR{4})}
      \big|\\ &\qquad\qquad
    \leq
      \underbrace{
        \Big|
          \MCM{(\lSP(\uSPF)-\lSP(\uSPR)+\rSP(\uSDR{1}))^2}
        \Big|
      }_{
        \stackrel{\eqref{eq:functionalBoundsDualBased}}\leq \MCM[\Big]{\frac{\|\rSP\|_{\hilbSDF}^2 \|\rSD{1}\|_{\hilbSDF}^2}{\coerS^2}}
      } \\&\qquad\qquad\qquad
    + \underbrace{
        \Big|
          \MCM{\lSP(\uSPF)-\lSP(\uSPR)+\rSP(\uSDR{1})}
        \Big|
      }_{
        \stackrel{\eqref{eq:functionalBoundsDualBased}}\leq \MCM[\Big]{\frac{\|\rSP\|_{\hilbSDF} \|\rSD{1}\|_{\hilbSDF}}{\coerS}}
      }
      \underbrace{
        \Big|
          \MC{\lSP(\uSPF)-\lSP(\uSPR)+\rSP(\uSDR{1})}
        \Big|
      }_{
        \stackrel{\eqref{eq:functionalBoundsDualBased}}\leq \MC[\Big]{\frac{\|\rSP\|_{\hilbSDF} \|\rSD{1}\|_{\hilbSDF}}{\coerS}}
      } \\&\qquad\qquad\qquad
    + \Big|\;
        \MCM{\;
          \underbrace{
            2(\lSP(\uSPR) - \rSP(\uSDR{1}))\lSP(\eP) + \rSP(\uSDR{2})
          }_{
            \stackrel{\eqref{eq:residualStrong},\eqref{eq:strongPrimal},\eqref{eq:residualStrongDual}}= -\rSD{2}(\eP)
          }\;
        }
      - \MCM{\;
          \underbrace{
            \MC{\lSP(\uSPR) - \rSP(\uSDR{1})}\lSP(\eP) + \rSP(\uSDR{3})
          }_{
            \stackrel{\eqref{eq:residualStrong},\eqref{eq:strongPrimal},\eqref{eq:residualStrongDual}}= -\rSD{3}(\eP)
          }\;
        } \\&\qquad\qquad\qquad\qquad
      - \MC{\;
          \underbrace{
            \MCM{\lSP(\uSPR) - \rSP(\uSDR{1})}\lSP(\eP) + \rSP(\uSDR{4})
          }_{
            \stackrel{\eqref{eq:residualStrong},\eqref{eq:strongPrimal},\eqref{eq:residualStrongDual}}= -\rSD{4}(\eP)
          }\;
        }\;
      \Big|,
  \end{align*}
  where 
  \[
    \big|
        \MCM{\rSD{2}(\eP)}
      - \MCM{\rSD{3}(\eP)}
      - \MC {\rSD{4}(\eP)}
      \big|
    \stackrel{\eqref{eq:dualNormStrong},\eqref{eq:errorReducedStrong}}\leq
      \MCM[\Bigg]{
        \frac{
        \big\|
          \rSD{2} 
        - \rSD{3}
        - \frac{\nRandSnap-1}{\nRandSnap}\rSD{4}
        \big\|_{\hilbSDF}
        \big\|
          \rSP
        \big\|_{\hilbSDF}
        }{\coerS}
      }.
  \]
\end{proof}

\section{Stochastic Galerkin reduced basis method} \label{sec:sgrb}

\newcommand{\hilbWPF}{\bar\hilbSPF}

In the following, we replace the Monte Carlo sampling by a stochastic Galerkin procedure. We provide a full-order model (\cref{sec:sgrbFeModel}) and a reduced-order model (\cref{sec:sgrbRbModel}) to approximate the stochastic solution of the PDE problem for a given realization of the deterministic input parameters. The computation of statistics of linear outputs are described in \cref{sec:sgrbRbEstimates}, together with the respective error bounds. The computation can be separated into an expensive offline phase and a inexpensive online phase by standard means \cite{PrudhommeEA2002}.

\subsection{Stochastic Galerkin finite element model} \label{sec:sgrbFeModel}

\newcommand{\nSg}{M_\text{SG}}

\newcommand{\hilbWDF}{\bar\hilbSPF'}
\newcommand{\hilbWPR}[1][\nRb]{\hilbWPF^#1}

\newcommand{\aW}{\bar\aS}
\newcommand{\aWpara}[3][\para]{\aW(#2,#3;#1)}

\newcommand{\fW}{\bar\fS}
\newcommand{\fWpara}[2][\para]{\fW(#2;#1)}

\newcommand{\lW}{\bar\lSP}
\newcommand{\lWpara}[2][\para]{\lW(#2;#1)}

We introduce a stochastic Galerkin discretization space $\hilbSg\subset\lebesgue{\randomVariable}{\dRand}$. An example is given in \cref{sec:experimentsDiscretization}. We define the product space $\hilbWPF := \hilbSg\otimes\hilbSPF$, which is a Hilbert space with inner product $(\cdot,\cdot)_{\hilbWPF} := (\cdot,\cdot)_{\bochner[\randomVariable]{\dRand}{\hilbSPF}}$ and norm $\|\cdot\|_{\hilbWPF} := \|\cdot\|_{\bochner[\randomVariable]{\dRand}{\hilbSPF}}$ in terms of the Bochner-type space $\banach[2]{\randomVariable}{\dRand}{\hilbSPF}:=\{\,v:\dRand\rightarrow\hilbSPF\mid\int_{\dRand} \|v(\rand)\|_{\hilbSPF}^2\density[\randomVariable](\rand) \dd\rand<\infty\,\}$.

We derive a spatial-stochastic weak formulation by taking the expectation of \eqref{eq:strongPrimal}. Defining $\aWpara{\uTrial}{\uTest} := \E{\aSpara[\cdot,\para]{\uTrial}{\uTest}}$ and $\fWpara{\uTest} := \E{\fSpara[\cdot,\para]{\uTest}}$ provides
\begin{problem}[SGFE model]\label{problem:discretizedWeak}
  For given $\para\in\dPara$, find
\begin{equation} \label{eq:discretizedWeak}
  \uWPFpara\in \hilbWPF\;\colon\quad \aWpara{\uWPFpara}{\uTest} = \fWpara{\uTest}\quad\forall \uTest\in \hilbWPF.
\end{equation}
\end{problem}
As a consequence of the coercivity and boundedness properties associated with \cref{problem:strong}, the bilinear form $\aWpara{\cdot}{\cdot}$ is $\para$-uniformly bounded and coercive on $\hilbWPF$ and the linear form $\fWpara{\cdot}$ is $\para$-uniformly bounded on $\hilbWPF$. Therefore, \cref{problem:discretizedWeak} has a unique solution for any given $\para\in\dPara$ according to the Lax-Milgram lemma.

\subsection{Stochastic Galerkin reduced basis model} \label{sec:sgrbRbModel}

We introduce an $\nRb$-dimensional reduced space $\hilbWPR\subset\hilbWPF$. Suitable reduced spaces are provided in \cref{sec:spacesSgrb}. A reduced form of \cref{problem:discretizedWeak} is given as follows:
\begin{problem}[SGRB model] \label{problem:reducedWeak}
  For given $\para\in\dPara$, find
\[
  \uWPRpara\in\hilbWPR\;\colon\quad \aWpara{\uWPRpara}{\uTest} = \fWpara{\uTest}\quad\forall \uTest\in \hilbWPR.
\]
\end{problem}
The subspace property $\hilbWPR\subset\hilbWPF$ and the well-posedness of \cref{problem:discretizedWeak} imply that \cref{problem:reducedWeak} has a unique solution.

\subsection{Output statistics and error estimates} \label{sec:sgrbRbEstimates}

\newcommand{\hilbWDR}[1]{\hilbWPR_{(#1)}}

\newcommand{\uWDF}[1]{\uWPF_{(#1)}}
\newcommand{\uWDFpara}[2][\para]{\uWDF{#2}(#1)}
\newcommand{\uWDR}[1]{\uWPR_{(#1)}}
\newcommand{\uWDRpara}[2][\para]{\uWDR{#2}(#1)}

\newcommand{\coerW}{\bar\coerS}
\newcommand{\coerWpara}[1][\para]{\coerW(#1)}

\newcommand{\contWD}[1]{\bar\gamma_{(#1)}}
\newcommand{\contWDpara}[2][\para]{\contWD{#2}(#1)}

\newcommand{\rWP}{\bar\rSP}
\newcommand{\rWD}[1]{\rWP_{(#1)}}

We derive SGRB approximations of the expectation and variance of linear outputs together with error bounds with respect to the corresponding SGFE approximations. The variance can be interpreted in terms of quadratic outputs. We follow the ideas of \cite{HuynhEA2006,Sen2007} to derive the respective error bounds.

We introduce a linear form $\lWpara{\uTest} := \E{\lSPpara[\cdot,\para]{\uTest}}$, which is $\para$-uniformly bounded on $\hilbWPF$. We complement the primal problem of \cref{sec:sgrbFeModel} with corresponding dual problems:
\begin{problem}[dual SGFE models]\label{problem:weakDual}
For given $\para\in\dPara$, find 
\begin{alignat*}{3}
  \uWDF{1}\in\hilbWPF &\;\colon\quad& \aW(\uTest,\uWDF{1}) &= -\lW(\uTest) &\quad& \forall\uTest\in\hilbWPF,\\
  \uWDF{2}\in\hilbWPF &\;\colon\quad& \aW(\uTest,\uWDF{2}) &= -\E{\lSP(\uWPF+\uWPR)\lSP(\uTest)} &\quad& \forall\uTest\in\hilbWPF,\\
  \uWDF{3}\in\hilbWPF &\;\colon\quad& \aW(\uTest,\uWDF{3}) &= -\big(\lW(\uWPF)+\lW(\uWPR)-2\rWP(\uWDR{1})\big)\lW(\uTest) &\quad& \forall\uTest\in\hilbWPF.
\end{alignat*}
\end{problem}
Letting $\hilbWDR{1}\subset\hilbWPF$, $\hilbWDR{2}\subset\hilbWPF$ and $\hilbWDR{3}\subset\hilbWPF$ be $\nRb$-dimensional subspaces, we introduce the following set of reduced dual equations:
\begin{problem}[dual SGRB models]\label{problem:weakReducedDual}
For given $\para\in\dPara$, find
\begin{alignat*}{3}
  \uWDR{1}\in\hilbWDR{1} &\;\colon\quad& \aW(\uTest,\uWDR{1}) &= -\lW(\uTest) &\quad& \forall\uTest\in\hilbWDR{1},\\
  \uWDR{2}\in\hilbWDR{2} &\;\colon\quad& \aW(\uTest,\uWDR{2}) &= -2\E{\lSP(\uWPR)\lSP(\uTest)} &\quad& \forall\uTest\in\hilbWDR{2},\\
  \uWDR{3}\in\hilbWDR{3} &\;\colon\quad& \aW(\uTest,\uWDR{3}) &= -2\big(\lW(\uWPR)-\rWP(\uWDR{1})\big)\lW(\uTest) &\quad& \forall\uTest\in\hilbWDR{3}.
\end{alignat*}
\end{problem}
The error bounds will be provided in terms of dual norms of residuals:
\begin{definition}\label{definition:residualWeak}
  Based on \cref{problem:discretizedWeak,problem:reducedWeak,problem:weakDual,problem:weakReducedDual},
  \begin{align}
    \rWP   (\cdot) &:=  \fW(\cdot) - \aW(\uWPR,\cdot), \label{eq:weakPrimalResidual}\\
    \rWD{1}(\cdot) &:= -\lW(\cdot) - \aW(\cdot,\uWDR{1}), \label{eq:weakDualResidual1}\\
    \rWD{2}(\cdot) &:= -2\E{\lSP(\uWPR)\lSP(\cdot)} - \aW(\cdot,\uWDR{2}), \label{eq:weakDualResidual2}\\
    \rWD{3}(\cdot) &:= -2\big(\lW(\uWPR)-\rWP(\uWDR{1})\big)\lW(\cdot) - \aW(\cdot,\uWDR{3}) \label{eq:weakDualResidual3}.
  \end{align}
\end{definition}
We define, for any $\para\in\dPara$, the coercivity factor
\[
  \coerWpara = \inf_{\uTest\in\hilbWPF\setminus\{0\}}\frac{\aWpara{\uTest}{\uTest}}{\|\uTest\|_{\hilbWPF}^2}
\]
and the continuity factor
\begin{equation} \label{eq:continuityVarianceWeak}
  \contWDpara{2} = \sup_{\uTrial,\uTest\in\hilbWPF\setminus\{0\}}\frac{\E{\lSPpara[\para,\cdot]{\uTrial}\lSPpara[\para,\cdot]{\uTest}}}{\|\uTrial\|_{\hilbWPF}\|\uTest\|_{\hilbWPF}}.
\end{equation}
It is possible to replace these factors by efficiently computable upper and lower bounds \cite{HuynhEA2006}. We introduce the dual space $\hilbWDF$ of $\hilbWPF$ with norm
\begin{equation} \label{eq:dualNormWeak}
  \|F\|_{\hilbWDF} := \sup_{v\in \hilbWPF\setminus\{0\}}\frac{|F(v)|}{\|v\|_{\hilbWPF}} \qquad \forall F\in\hilbWDF.
\end{equation}

We can derive the following error bound for the error in the reduced-order approximation of the solution:
\begin{lemma}[solution bound]\label{theorem:errorReducedWeak}
  For given $\para\in\dPara$,
  \begin{equation} \label{eq:errorReducedWeak}
    \|\uWPF-\uWPR\|_{\hilbWPF} \leq \frac{\|\rWP\|_{\hilbWDF}}{\coerW}.
  \end{equation}
\end{lemma}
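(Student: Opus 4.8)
The plan is to transcribe the point-wise argument of \cref{theorem:errorReducedStrong} into the product space $\hilbWPF = \hilbSg\otimes\hilbSPF$ equipped with its Bochner norm, relying on exactly the same three ingredients as in the Monte Carlo setting: coercivity, the full-order weak equation, and the dual norm estimate. Since the paper has already established that $\aWpara{\cdot}{\cdot}$ is $\para$-uniformly coercive on $\hilbWPF$ and that \cref{problem:discretizedWeak,problem:reducedWeak} are well-posed, no additional functional-analytic groundwork is required.

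First I would set $\bar e := \uWPF - \uWPR$, which belongs to $\hilbWPF$ because $\hilbWPR\subset\hilbWPF$. Using the definition of the coercivity factor $\coerW$ as the infimum of $\aWpara{\uTest}{\uTest}/\|\uTest\|_{\hilbWPF}^2$ over $\hilbWPF\setminus\{0\}$, I obtain $\coerW\|\bar e\|_{\hilbWPF}^2 \leq \aW(\bar e,\bar e)$. I would then split $\aW(\bar e,\bar e) = \aW(\uWPF,\bar e) - \aW(\uWPR,\bar e)$ and invoke the SGFE primal equation \eqref{eq:discretizedWeak}, tested against $\bar e\in\hilbWPF$, to replace $\aW(\uWPF,\bar e)$ by $\fW(\bar e)$. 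By the definition of the primal residual \eqref{eq:weakPrimalResidual}, the right-hand side becomes $\fW(\bar e) - \aW(\uWPR,\bar e) = \rWP(\bar e)$.

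To close the chain, I would bound $\rWP(\bar e) \leq \|\rWP\|_{\hilbWDF}\|\bar e\|_{\hilbWPF}$ using the dual norm \eqref{eq:dualNormWeak}, which yields $\coerW\|\bar e\|_{\hilbWPF}^2 \leq \|\rWP\|_{\hilbWDF}\|\bar e\|_{\hilbWPF}$; dividing by $\coerW\|\bar e\|_{\hilbWPF}$ then gives \eqref{eq:errorReducedWeak}, with the degenerate case $\bar e = 0$ trivially satisfying the bound.

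I do not anticipate a genuine obstacle, as the estimate is a direct analogue of \cref{theorem:errorReducedStrong}. The one point meriting care is that the test function $\bar e$ must be drawn from the full space $\hilbWPF$ rather than the reduced space $\hilbWPR$, so that the crucial substitution uses the full-order equation \eqref{eq:discretizedWeak} of \cref{problem:discretizedWeak}, not the reduced \cref{problem:reducedWeak}; this is precisely what makes the residual $\rWP$, rather than zero, appear on the right.
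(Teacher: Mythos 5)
Your proposal is correct and is exactly the argument the paper intends: the paper's proof simply states ``Analog to the proof of \cref{theorem:errorReducedStrong},'' and your chain $\coerW\|\bar e\|_{\hilbWPF}^2 \leq \aW(\bar e,\bar e) = \rWP(\bar e) \leq \|\rWP\|_{\hilbWDF}\|\bar e\|_{\hilbWPF}$ is precisely that transcription into $\hilbWPF$. Your closing remark---that $\bar e$ is tested in the full space so that \eqref{eq:discretizedWeak} rather than \cref{problem:reducedWeak} is invoked---is also the right point of care.
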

\begin{proof} Analog to the proof of \cref{theorem:errorReducedStrong}.
\end{proof}

In view of the definition of the weak linear form $\lW$, we obtain the following bounds for the expected value and variance of the output:
\begin{theorem}[expected output bound] \label{theorem:expectationBoundsWeak}
For given $\para\in\dPara$,
\begin{equation} \label{eq:expectationBoundsWeak}
  |\E{\lSP(\uWPF)}-\lW(\uWPR)+\rWP(\uWDR{1})| \leq \frac{\|\rWP\|_{\hilbWDF}\|\rWD{1}\|_{\hilbWDF}}{\coerW}.
\end{equation}
\end{theorem}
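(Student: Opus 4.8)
The plan is to replay the argument of \cref{theorem:functionalBoundsDualBased} in the spatial-stochastic setting, where the deterministic and stochastic dependencies have been merged into the single Bochner space $\hilbWPF$. First I would observe that, by the definition $\lW(\cdot)=\E{\lSP(\cdot)}$, the leading term equals $\E{\lSP(\uWPF)}=\lW(\uWPF)$, so that linearity of $\lW$ lets me rewrite the quantity inside the absolute value as $\lW(\bar e)+\rWP(\uWDR{1})$ with the reduced-basis error $\bar e:=\uWPF-\uWPR$. This reduces the claim to estimating a single functional of the error, exactly as in the pointwise case.

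Next I would eliminate the residual term. Using the primal residual definition \eqref{eq:weakPrimalResidual}, one has $\rWP(\uWDR{1})=\fW(\uWDR{1})-\aW(\uWPR,\uWDR{1})$; since $\uWDR{1}\in\hilbWDR{1}\subset\hilbWPF$ is an admissible test function in the SGFE problem \eqref{eq:discretizedWeak}, substituting $\fW(\uWDR{1})=\aW(\uWPF,\uWDR{1})$ gives $\rWP(\uWDR{1})=\aW(\bar e,\uWDR{1})$. Combining this with the first dual residual \eqref{eq:weakDualResidual1}, written as $-\rWD{1}(\cdot)=\lW(\cdot)+\aW(\cdot,\uWDR{1})$, collapses the whole expression to $\lW(\bar e)+\aW(\bar e,\uWDR{1})=-\rWD{1}(\bar e)$, so the left-hand side equals $|\rWD{1}(\bar e)|$.

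Finally I would bound this quantity by its product structure. Applying the dual norm \eqref{eq:dualNormWeak} yields $|\rWD{1}(\bar e)|\leq\|\rWD{1}\|_{\hilbWDF}\,\|\bar e\|_{\hilbWPF}$, and the solution bound \cref{theorem:errorReducedWeak} replaces $\|\bar e\|_{\hilbWPF}$ by $\|\rWP\|_{\hilbWDF}/\coerW$, delivering the stated estimate with its characteristic quadratic dependence on the residual norms.

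I expect no serious obstacle, since the weak formulation was deliberately set up so that $\hilbWPF$ plays the role of the trial space $\hilbSPF$ of \cref{sec:mcrb}; the only points requiring care are the bookkeeping identity $\E{\lSP(\uWPF)}=\lW(\uWPF)$ and the admissibility of $\uWDR{1}$ as an SGFE test function, which is precisely what lets the reduced dual solution be tested against the full primal equation. In contrast to \cref{theorem:expectationBoundsStrong}, no Monte Carlo averaging or Jensen inequality is needed here, because the expectation has already been absorbed into the weak forms during the Galerkin projection.
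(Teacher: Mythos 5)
Your proof is correct and matches the paper's intent exactly: the paper proves \cref{theorem:expectationBoundsWeak} by declaring it ``analog to the proof of \cref{theorem:functionalBoundsDualBased}'', and your argument is precisely that transcription, including the two bookkeeping points (the identity $\E{\lSP(\uWPF)}=\lW(\uWPF)$ and the admissibility of $\uWDR{1}$ as a test function in \eqref{eq:discretizedWeak}) that the ``analog'' proof glosses over. No gaps; the chain $\lW(\bar e)+\rWP(\uWDR{1})=-\rWD{1}(\bar e)$ followed by the dual-norm estimate and \cref{theorem:errorReducedWeak} is exactly what the paper relies on.
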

\begin{proof}
  Analog to the proof of \cref{theorem:functionalBoundsDualBased}.
\end{proof}
\begin{theorem}[output variance bound] \label{theorem:varianceBoundsWeak}
For given $\para\in\dPara$,
\begin{align}
  & \big|
      \V{\lSP(\uWPF)}
    - \V{\lSP(\uWPR)}
    - \rWP(\uWDR{1})^2
    + \rWP(\uWDR{2})
    - \rWP(\uWDR{3})
    \big|\notag\\
  &\qquad 
  \leq
    \frac{
      \contWD{2}
      \|
        \rWP
      \|_{\hilbWDF}^2
    }{\coerW^2}
  +
    \frac{
      \|
        \rWP
      \|_{\hilbWDF}^2
      \|
        \rWD{1}
      \|_{\hilbWDF}^2
    }{\coerW^2}
  +
    \frac{
      \|
        \rWD{2}
      - \rWD{3}
      \|_{\hilbWDF}
      \|
        \rWP
      \|_{\hilbWDF}
    }{\coerW}.\label{eq:varianceBoundsWeak}
\end{align}
\end{theorem}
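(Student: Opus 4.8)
The plan is to collapse the entire left-hand side into an identity that is purely quadratic in the primal error $e := \uWPF - \uWPR$ and then to estimate each remaining term by a residual norm. Two preliminary reductions set this up. First, testing the primal problem \eqref{eq:discretizedWeak} against an arbitrary $v\in\hilbWPF$ shows, exactly as in \cref{theorem:errorReducedWeak}, that $\rWP(v) = \aW(e,v)$ for all $v\in\hilbWPF$; in particular $\rWP(\uWDF{i}) = \aW(e,\uWDF{i})$ for every full dual solution. Second, using $\V{g}=\E{g^2}-\E{g}^2$ together with $\lSP(\uWPF)-\lSP(\uWPR)=\lSP(e)$ gives the splitting
\[
  \V{\lSP(\uWPF)} - \V{\lSP(\uWPR)}
  = \E{\lSP(e)\lSP(\uWPF + \uWPR)} - \lW(e)\big(\lW(\uWPF) + \lW(\uWPR)\big).
\]

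Next I would use the three full dual problems of \cref{problem:weakDual} to rewrite these (inaccessible) quantities as primal residuals: the second dual turns $\E{\lSP(e)\lSP(\uWPF+\uWPR)}$ into $-\rWP(\uWDF{2})$, the first dual gives $\lW(e)=-\rWP(\uWDF{1})$, and the third dual turns $\big(\lW(\uWPF)+\lW(\uWPR)-2\rWP(\uWDR{1})\big)\lW(e)$ into $-\rWP(\uWDF{3})$, so that the leftover piece $2\rWP(\uWDR{1})\lW(e)$ is carried along via the first dual. The crucial step is then to pass from $\uWDF{i}$ to the reduced duals $\uWDR{i}$ that actually appear in the claimed bound. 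Writing $\rWP(\uWDF{i})=\rWP(\uWDR{i})+\aW(e,\uWDF{i}-\uWDR{i})$ and evaluating each correction by subtracting the reduced dual equation of \cref{problem:weakReducedDual} from the corresponding full one and invoking \eqref{eq:weakDualResidual1}--\eqref{eq:weakDualResidual3}, I expect the three identities $\aW(e,\uWDF{1}-\uWDR{1})=\rWD{1}(e)$, $\aW(e,\uWDF{2}-\uWDR{2})=\rWD{2}(e)-\E{\lSP(e)^2}$, and $\aW(e,\uWDF{3}-\uWDR{3})=\rWD{3}(e)-\lW(e)^2$. Here it is essential that the reduced duals were deliberately built from $\uWPR$ and from the residual-corrected surrogate $\lW(\uWPR)-\rWP(\uWDR{1})$ of the unknown $\lW(\uWPF)$: this is precisely what produces the clean quadratic remainders $\E{\lSP(e)^2}$ and $\lW(e)^2$ instead of cross terms still involving $\uWPF$.

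Substituting these back and expanding $\lW(e)^2$ through $\lW(e)=-\rWP(\uWDR{1})-\rWD{1}(e)$ (itself a consequence of the first-dual passage), I expect all the correction terms $\rWP(\uWDR{1})^2$, $\rWP(\uWDR{2})$ and $\rWP(\uWDR{3})$ to telescope against their counterparts on the left, leaving the compact identity
\[
  \V{\lSP(\uWPF)} - \V{\lSP(\uWPR)} - \rWP(\uWDR{1})^2 + \rWP(\uWDR{2}) - \rWP(\uWDR{3})
  = \E{\lSP(e)^2} - \rWD{1}(e)^2 - \big(\rWD{2} - \rWD{3}\big)(e).
\]
From here the estimate is routine: the triangle inequality, the continuity factor \eqref{eq:continuityVarianceWeak} for $\E{\lSP(e)^2}\le\contWD{2}\|e\|_{\hilbWPF}^2$, the dual norm \eqref{eq:dualNormWeak} for $\rWD{1}(e)^2$ and $\big(\rWD{2}-\rWD{3}\big)(e)$, and finally \cref{theorem:errorReducedWeak} to replace $\|e\|_{\hilbWPF}$ by $\|\rWP\|_{\hilbWDF}/\coerW$, reproduce the three terms of \eqref{eq:varianceBoundsWeak}.

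The main obstacle will be the bookkeeping in the middle step: verifying the three $\aW(e,\uWDF{i}-\uWDR{i})$ identities and then confirming that the correction terms cancel \emph{exactly} requires care with signs and with the asymmetric right-hand sides of the reduced duals, and this is where an error is most likely to slip in. The variance splitting and the closing norm estimates, by contrast, are entirely standard.
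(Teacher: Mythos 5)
Your proposal is correct and, despite the detour through the full dual solutions $\uWDF{2},\uWDF{3}$, it is essentially the paper's own proof: both arguments reduce the left-hand side to the exact identity $\E{\lSP(e)^2}-\rWD{1}(e)^2-\big(\rWD{2}-\rWD{3}\big)(e)$ — the paper writes the middle term as $\big(\lW(e)+\rWP(\uWDR{1})\big)^2$, which coincides with $\rWD{1}(e)^2$ because $\lW(e)+\rWP(\uWDR{1})=-\rWD{1}(e)$ — and then close with the same three estimates via the continuity factor \eqref{eq:continuityVarianceWeak}, the dual norm \eqref{eq:dualNormWeak} and the solution bound of \cref{theorem:errorReducedWeak}. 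The only cosmetic differences are that the paper obtains the cancellations directly from $\rWP(\uWDR{i})=\aW(e,\uWDR{i})$ and the residual definitions \eqref{eq:weakDualResidual2}--\eqref{eq:weakDualResidual3}, never invoking $\uWDF{2},\uWDF{3}$, and bounds the squared term by citing \cref{theorem:expectationBoundsWeak} instead of estimating $\rWD{1}(e)^2$ directly, which yields the identical final bound.
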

\begin{proof}
  Setting $\eP = \uWPF - \uWPR$, we rewrite
  \begin{align}
        \rWP(\uWDR{2}) 
      &\stackrel{\eqref{eq:discretizedWeak},\eqref{eq:weakPrimalResidual}}= 
        \aW(\eP,\uWDR{2}) 
      \stackrel{\eqref{eq:weakDualResidual2}}= 
      - 2\E{\lSP(\uWPR)\lSP(\eP)} - \rWD{2}(\eP), \label{eq:varianceBoundsWeakProof2}
    \\
        \rWP(\uWDR{3}) 
      &\stackrel{\eqref{eq:discretizedWeak},\eqref{eq:weakPrimalResidual}}=
        \aW(\eP,\uWDR{3}) 
      \stackrel{\eqref{eq:weakDualResidual3}}= 
      - 2(\lW(\uWPR)-\rWP(\uWDR{1}))\lW(\eP) - \rWD{3}(\eP). \label{eq:varianceBoundsWeakProof3}
  \end{align}
  After expressing the variance in terms of expectations, the left-hand side of \eqref{eq:varianceBoundsWeak} becomes
  \begin{align*}
  & \big|
      \underbrace{
        \E{\lSP(\uWPF)^2}
      - \E{\lSP(\uWPR)^2}
      + \rWP(\uWDR{2})
      }_{
      = \E{\lSP(\eP)^2} - \rWD{2}(\eP) \;\text{by}\;\eqref{eq:varianceBoundsWeakProof2}
      } 
      \underbrace{
      - \lW(\uWPF)^2
      + \lW(\uWPR)^2
      - \rWP(\uWDR{1})^2
      - \rWP(\uWDR{3})
      }_{
      = \rWD{3}(\eP) - (\lW(\eP) + \rWP(\uWDR{1}))^2 \;\text{by}\;\eqref{eq:varianceBoundsWeakProof3}
      }
    \big|
  \\ &\qquad
  \leq
    \big|
      \E{\lSP(\eP)^2}
    \big|
  + 
    (\lW(\eP) + \rWP(\uWDR{1}))^2
  + \big|
      \rWD{2}(\eP) - \rWD{3}(\eP)
    \big|.
\end{align*}
The final result follows from the following bounds:
\begin{align}
      \big|
        \E{\lSP(\eP)^2}
      \big|
    &\stackrel{\eqref{eq:continuityVarianceWeak}}\leq
      \contWD{2}
      \|
        \eP
      \|_{\hilbWPF}^2
    \stackrel{\eqref{eq:errorReducedWeak}}\leq
      \frac{ 
        \contWD{2}
        \|
          \rWP
        \|_{\hilbWDF}^2
      }{\coerW^2}, \label{eq:varianceBoundsWeakProofContinuity}
  \\
      (
        \lW(\eP) 
      + \rWP(\uWDR{1})
      )^2 
    &\stackrel{\eqref{eq:expectationBoundsWeak}}\leq
      \frac{
        \|
          \rWP
        \|_{\hilbWDF}^2
        \|
          \rWD{1}
        \|_{\hilbWDF}^2
      }{\coerW^2},\label{eq:varianceBoundsWeakProofSquaredResidual}
  \\
    \big|
      \rWD{2}(\eP) 
    - \rWD{3}(\eP)
    \big| 
    &\stackrel{\eqref{eq:dualNormWeak}}\leq
    \|
      \rWD{2}
    - \rWD{3}
    \|_{\hilbWDF}
    \|
      \eP
    \|_{\hilbWPF}
    \stackrel{\eqref{eq:errorReducedWeak}}\leq 
    \frac{
      \|
        \rWD{2}
      - \rWD{3}
      \|_{\hilbWDF}
      \|
        \rWP
      \|_{\hilbWDF}
    }{\coerW}.\notag
\end{align}
\end{proof}

\section{Reduced spaces} \label{sec:spaces}

\newcommand{\rbf}[1]{\phi_{#1}}
\newcommand{\iRb}{r}
\newcommand{\paraSample}[1]{\para^{#1}}

\newcommand{\hilb}{H}
\newcommand{\SnapshotMatrix}{\mathcal U}
\newcommand{\SnapshotVector}[1]{U_{#1}}
\newcommand{\RightWeighting}{\mathcal W}
\newcommand{\LeftWeighting}{\mathcal S}
\newcommand{\RightWeightingFactor}{\tilde{\mathcal W}}
\newcommand{\LeftWeightingFactor}{\tilde{\mathcal S}}
\newcommand{\MHilbDet}{{\mathcal M}_{\hilbSPF}}
\newcommand{\MHilbSg}{{\mathcal M}_{\hilbSg}}
\newcommand{\UP}[2]{U^{#1}_{#2}}
\newcommand{\weight}[1]{\alpha^{#1}}

\newcommand{\iPc}{q}
\newcommand{\nPc}{|\hilbSg|}

\newcommand{\nDof}{M}

We introduce candidate reduced spaces $\hilbSPR$ and $\hilbWPR$ to be used in 
\cref{problem:reducedStrong,problem:reducedWeak}, respectively.~For simplicity, we focus on spaces generated by 
snapshot-based proper orthogonal decomposition (POD), but the theory of \cref{sec:mcrb,sec:sgrb} does not dependent on 
this choice.~We point out that the availability of computable error bounds also allows the use of greedy snapshot 
sampling \cite{BoyavalEA2009,HaasdonkEA2013,NgocCuongEA2005}.

The procedures described in this section can also be applied to create the dual reduced spaces encountered in 
\cref{problem:strongReducedDual,problem:weakReducedDual}, by applying the POD to snapshots of the corresponding 
discretized dual solutions.~The creation of the dual reduced spaces must follow a certain sequence because some of the 
discretized dual problems contain reduced primal and dual solutions on their right-hand sides. For instance, creating 
$\hilbWDR{3}$ from samples of $\uWDF{3}$ requires the availability of $\hilbWPR$ and $\hilbWDR{1}$ due to the right-hand 
side of the discretized dual problem that defines $\uWDF{3}$, see \cref{problem:weakDual}.

We motivate the POD spaces by corresponding continuous minimization problems. We discretize these minimization problems 
using quadrature \citep[sections 6.4 and 6.5]{QuarteroniEA2016}. The discrete minimization problems can be solved using 
a weighted singular value decomposition of a snapshot matrix, based on \cite{KunischVolkwein1999}. \Cref{algorithm} 
provides a definition of the POD algorithm in terms of linear algebra, assuming $\nSnap$ snapshot vectors of length 
$\nDof$. The algorithm is formulated in a way that allows a general snapshot weighting and the maximum possible number 
of output vectors. Actual implementations can benefit from using a simpler (diagonal) snapshot weighting matrix and 
assuming a small number of output vectors. \Cref{sec:spacesMcrb,sec:spacesSgrb} describe how to generate the input to 
the algorithm in order to compute POD basis vectors from available FE or SGFE snapshots.

\begin{algorithm}[htbp]
\caption{Proper orthogonal decomposition.}\label{algorithm}
\begin{algorithmic}[1]
\renewcommand{\algorithmicrequire}{\textbf{Input:}}
\renewcommand{\algorithmicensure}{\textbf{Output:}}
\REQUIRE Snapshot matrix  $\SnapshotMatrix = \left(\UP{1}{},\dots,\UP{\nSnap}{}\right)\in\RR^{\nDof\times\nSnap}$. Symmetric positive definite weighting matrices $\LeftWeighting\in\RR^{\nDof\times\nDof}$ and $\RightWeighting\in\RR^{\nSnap\times\nSnap}$.
\ENSURE POD basis matrix $\Phi = (\Phi_1,\dots,\Phi_{\nDof})\in\RR^{\nDof\times\nDof}$.
\STATE Compute Cholesky factor $\LeftWeightingFactor$ such that $\LeftWeighting = \LeftWeightingFactor^T\LeftWeightingFactor$.
\STATE Compute Cholesky factor $\RightWeightingFactor$ such that $\RightWeighting = \RightWeightingFactor^T\RightWeightingFactor$.
\STATE Compute singular value decomposition $\tilde{\Phi}\Sigma\tilde{\mathcal V}^T$ of $\LeftWeightingFactor\SnapshotMatrix \RightWeightingFactor^T$.
\STATE Solve $\LeftWeightingFactor\Phi = \tilde{\Phi}$ for $\Phi$.
\end{algorithmic}
\end{algorithm}

\subsection{Spatial POD} \label{sec:spacesMcrb}

\newcommand{\podS}[1]{\varphi_{#1}}

We provide a POD of snapshots of the solution $\uSPF$ of \cref{problem:strong}, resulting in a spatial POD space $\hilbSPR=\spn(\podS{1},\dots,\podS{\nRb})$ for $\nRb\leq\nFe$. One can define a POD basis as a set of functions which solve the continuous minimization problems
\begin{equation} \label{eq:PodStrong}
  \min_{\podS{1},\dots,\podS{\nRb}\in\hilbSPF}\int_\dPara\int_{\dRand}\left\|\uSPFpara-\sum_{\iRb=1}^\nRb(\uSPFpara,\podS{\iRb})_\hilbSPF\podS{\iRb}\right\|_\hilbSPF^2\dd\probability[\randomVariable](\rand)\dd\para,\quad (\podS{i},\podS{j})_\hilbSPF=\delta_{ij},
\end{equation}
for $\nRb=1,\dots,\nFe$. We approximate the double integral using Monte Carlo quadrature. 

Concerning the Monte Carlo quadrature of the $\dRand$-integral on the one hand, we already know that the reduced-order model will only be evaluated at the random parameter points $\randSnap{1},\dots,\randSnap{\nRandSnap}$, because the Monte Carlo discretization of the final stochastic estimates is fixed from the beginning. We use exactly these points for the discretization of the POD minimization problem, too, because with this choice our reduced basis will be optimal in a mean-square sense with respect to approximating the finite element solution at $\randSnap{1},\dots,\randSnap{\nRandSnap}$.

Concerning the Monte Carlo quadrature of the $\dPara$-integral on the other hand, our model should be able to estimate 
the output statistics reasonably well at any point in $\dPara$. Having no further information about how the 
reduced-order model will ultimately be used, we discretize the deterministic parameter domain using a training set 
$\dParaSnap=\{\paraSnap{1},\dots,\paraSnap{\nParaSnap}\}$, with $\paraSnap{1},\dots,\paraSnap{\nParaSnap}$ distributed 
independently and uniformly over $\dPara$. When testing the performance of the resulting reduced-order model, we use a 
different set of points in the parameter domain in order to verify the robustness of the model with respect to the 
deterministic parameter.

The Monte Carlo quadrature of the double integral in \eqref{eq:PodStrong} finally results in a set of discretized POD minimization problems
\begin{equation} \label{eq:PodStrongQuadrature}
  \min_{\podS{1},\dots,\podS{\nRb}\in\hilbSPF}\frac1{\nRandSnap\nParaSnap}\sum_{i=1}^{\nRandSnap}\sum_{j=1}^{\nParaSnap} \left\|\uSPFpara[\randSnap{i},\paraSample{j}]-\sum_{\iRb=1}^\nRb(\uSPFpara[\randSnap{i},\paraSample{j}],\podS{\iRb})_\hilbSPF\podS{\iRb}\right\|_\hilbSPF^2,\quad (\podS{i},\podS{j})_\hilbSPF=\delta_{ij},
\end{equation}
for $\nRb=1,\dots,\nFe$. For the POD computation in terms of \Cref{algorithm}, we set $\nSnap = \nRandSnap\nParaSnap$ 
and $\nDof=\nFe$ and let $\UP{(i-1)\nRandSnap+j}{}\in\RR^{\nDof}$ be the coefficient vector corresponding to the 
expansion of $\uSPFpara[\randSnap{i},\paraSample{j}]\in\hilbSPF$ in a basis of $\hilbSPF$ for $i=1,\dots,\nRandSnap$ and 
$j=1,\dots,\nParaSnap$. By substituting the finite element basis expansions into \eqref{eq:PodStrongQuadrature}, we find
\[
  \LeftWeighting=\MHilbDet,\quad\SnapshotMatrix = \left(\UP{1}{},\dots,\UP{\nSnap}{}\right),\quad\RightWeighting=\frac1\nSnap I_{\nSnap},
\]
where $\MHilbDet$ denotes the mass matrix corresponding to $\hilbSPF$ and $I_{\nSnap}$ denotes the $\nSnap\times\nSnap$ identity matrix. The output of \Cref{algorithm} is a POD basis matrix $\Phi = (\Phi_1,\dots,\Phi_{\nFe})\in\RR^{\nFe\times\nFe}$. The $i$-th POD basis function $\podS{i}$ can be obtained from the $i$-th POD basis vector $\Phi_i$ via an expansion in the available basis of $\hilbSPF$, using the elements of $\Phi_i$ as expansion coefficients. Finally, an $\nRb$-dimensional POD reduced space is given by $\hilbSPR = \spn(\podS{1},\dots,\podS{\nRb})$ for any $\nRb=1,\dots,\nFe$ and the trivial space $\hilbSPR[0]\subset\hilbSPF$ contains only the zero function.

\subsection{Spatial-stochastic POD} \label{sec:spacesSgrb}

\newcommand{\podW}[1]{\bar\varphi_{#1}}

We introduce a reduced basis space that can be used to derive a stochastic Galerkin reduced basis method. It employs a simultaneous reduction of the spatial and stochastic degrees of freedom of a stochastic Galerkin finite element discretization.

Spatial-stochastic POD reduced basis functions can be defined as solutions to a set of $\dPara$-con\-tin\-u\-ous POD minimization problems
\begin{equation*}
  \min_{\podW{1},\dots,\podW{\nRb}\in\hilbWPF}\int_\dPara\left\|\uWPFpara-\sum_{\iRb=1}^\nRb(\uWPFpara,\podW{\iRb})_{\hilbWPF}\podW{\iRb}\right\|_{\hilbWPF}^2\dd\para,\quad (\podW{i},\podW{j})_{\hilbWPF}=\delta_{ij},
\end{equation*}
for $\nRb=1,\dots,\nFe\nSg$. A Monte Carlo quadrature of the $\dPara$-integral raises the issue of choosing the sample points. Using the same training set $\dParaSnap=\{\paraSnap{1},\dots,\paraSnap{\nParaSnap}\}$ as in \cref{sec:spacesMcrb} leads to discrete POD minimization problems
\begin{equation} \label{eq:PodWeakDiscrete}
  \min_{\podW{1},\dots,\podW{\nRb}\in\hilbWPF}\frac1\nParaSnap\sum_{\iSnap=1}^{\nParaSnap}\left\|\uWPFpara[\paraSample{\iSnap}]-\sum_{\iRb=1}^\nRb(\uWPFpara[\paraSample{\iSnap}],\podW{\iRb})_{\hilbWPF}\podW{\iRb}\right\|_{\hilbWPF}^2,\quad (\podW{i},\podW{j})_{\hilbWPF}=\delta_{ij}
\end{equation}
for $\nRb=1,\dots,\nFe\nSg$. Regarding the POD computation in terms of \Cref{algorithm}, we set $\nSnap = \nParaSnap$ and $\nDof=\nFe\nSg$ and denote the stochastic Galerkin finite element coefficient vector of $\uWPFpara[\paraSample{\iSnap}]$ by $\UP{\iSnap}{}$. By substituting the stochastic Galerkin finite element basis expansions into \eqref{eq:PodWeakDiscrete}, we obtain
\[
  \LeftWeighting = \MHilbSg\otimes\MHilbDet,\quad \SnapshotMatrix = \left(\UP{1}{},\dots,\UP{\nParaSnap}{}\right),\quad\RightWeighting=\frac1\nParaSnap I_{\nParaSnap},
\]
where $I_{\nParaSnap}$ is the $\nParaSnap\times\nParaSnap$ identity matrix and $\MHilbSg$ is the mass matrix containing 
the mutual $\hilbSg$-inner products of the basis functions used to represent $\hilbSg$. In view of \Cref{algorithm}, 
the $i$-th POD basis function $\podW{i}$ can be obtained from the $i$-th POD basis vector $\Phi_i$ via an expansion in 
the available basis of $\hilbWPF$, using the elements of $\Phi_i$ as expansion coefficients. Finally, an 
$\nRb$-dimensional POD reduced space is given by $\hilbWPR = \spn(\podW{1},\dots,\podW{\nRb})$ for any 
$\nRb=1,\dots,\nFe\nSg$ and the trivial space $\hilbWPR[0]\subset\hilbWPF$ contains only the zero function.

\section{Numerical experiments} \label{sec:experiments}

\newcommand{\spaceVar}[1][]{x_{#1}}
\newcommand{\dSpace}{\Omega}

\newcommand{\diffusion}[1][]{\kappa_{#1}}
\newcommand{\eigval}[1]{\lambda_{#1}}
\newcommand{\stdDiffusion}{\sigma}
\newcommand{\lengthDiffusion}{L}

We assess the provided error bounds and compare the accuracy of the MCRB and SGRB models in terms of computing the 
expectation and variance of a linear output for a convection-diffusion-reaction problem.

\subsection{Example problem} \label{sec:experimentsFormulation}

Let $\rand = (\rand[1],\dots,\rand[\nRand])^T\in\dRand\subset\RR^\nRand$ denote the value of a sample of a random parameter vector, $\para = (\para[1],\para[2])^T \in\dPara\subset\RR^2$ the value of a deterministic parameter vector and $\spaceVar=(\spaceVar[1],\spaceVar[2])^T\in\dSpace\subset\RR^2$ the coordinate in the computational domain $\dSpace$. We model the random input by a second-order random field with expected value $\diffusion[0]$ and separable exponential covariance $c(\spaceVar) = \stdDiffusion^2\exp(-|\spaceVar[1]|/\lengthDiffusion-|\spaceVar[2]|/\lengthDiffusion)$, where $\stdDiffusion$ is the standard deviation and $\lengthDiffusion$ is the correlation length. We approximate the random field using a truncated Karhunen-Lo\`eve expansion
$
  \diffusion(\spaceVar;\rand) = \diffusion[0] + \stdDiffusion\sum_{\iRand=1}^\nRand \sqrt{\eigval{\iRand}}\diffusion[\iRand](\spaceVar)\rand[\iRand],
$
where $\eigval{\iRand}$ denote the eigenvalues of the corresponding eigenvalue problem, ordered decreasingly by magnitude, and $\diffusion[\iRand](\spaceVar)$ denote respective eigenfunctions. The covariance function allows for an analytical solution of the eigenvalue problem \cite{GhanemSpanos1991}. We assume that the parameters of the Karhunen-Lo\`eve expansion originate from independent uniformly distributed random variables.  The truncation index $\nRand$ can be interpreted as a modeling parameter, because it enters the definition of the bilinear form. The governing equations of our example problem are provided as follows:
\begin{problem}[spatial strong form]\label{problem:test}
For any $(\rand,\para)\in\dRand\times\dPara$, find $\uSPF(\cdot;\rand,\para)\colon\dSpace\rightarrow\RR$ such that
\begin{alignat*}{2}
  \para\cdot\nabla\uSPF(\spaceVar;\rand,\para) 
- \Delta\uSPF(\spaceVar;\rand,\para) 
- \diffusion(\spaceVar;\rand)\uSPF(\spaceVar;\rand,\para)
&= 1, &\quad& \spaceVar\in\dSpace,\\
  \uSPF(\spaceVar;\rand,\para) &= 0,&&\spaceVar\in\partial\dSpace.
\end{alignat*}
\end{problem}
The deterministic parameter vector $\para\in\dPara\subset\RR^2$ can be interpreted as a spatially uniform convective velocity. The random parameter vector $\rand\in\dRand\subset\RR^\nRand$ enters via a parametrized random reactivity. A concrete instance of the example problem is determined by the model parameters given in \cref{table:modelParameters}. The output of the example problem is given by $\lSPpara{\uSPF(\rand,\para)}$, where
\begin{equation} \label{eq:functional}
  \lSPpara{\uTest} = \int_0^\frac12\int_0^\frac12\uTest(\spaceVar)\dd\spaceVar[1]\dd\spaceVar[2].
\end{equation}

\begin{table}
\caption{Model parameters of the test problem.}\label{table:modelParameters}
\begin{center}
\begin{tabular}{p{0.1\textwidth}p{0.2\textwidth}p{0.6\textwidth}}
\toprule
symbol & value & description\\
\midrule
  $\diffusion[0]$&$-1000$&expected value of reactivity\\
  $\stdDiffusion$&$200$&standard deviation of reactivity\\
  $\lengthDiffusion$&$1$&correlation length\\
  $\nRand$&$5$&Karhunen-Lo\`eve truncation index\\
  $\dSpace$&$(-0.5,0.5)^2$&spatial domain with boundary $\partial\dSpace$\\
  $\dPara$&$[-200,200]^2$&deterministic parameter domain\\
  $\dRand$&$[-\sqrt3,\sqrt3]^\nRand$&random parameter domain\\
\bottomrule
\end{tabular}
\end{center}
\end{table}

In order to express the example problem in terms of the spatial weak form of \cref{problem:strong}, we set
\begin{align}
  \aSpara{\uTrial}{\uTest}
    &= a^{0}(\uTrial,\uTest) 
    + \sum_{\iRand=1}^{\nRand}\rand[\iRand]a_{\rand}^\iRand(\uTrial,\uTest)
    + \sum_{\iPara=1}^{2}\para[\iPara]a_{\para}^\iPara(\uTrial,\uTest),\label{eq:testBilinearForm}
\end{align}
with
\begin{alignat*}{2}
    a^{0}(\uTrial,\uTest) 
  &=
    \int_\dSpace \nabla\uTrial(\spaceVar)\cdot\nabla\uTest(\spaceVar) \dd\spaceVar
  - \diffusion[0] \int_\dSpace \uTrial(\spaceVar)\uTest(\spaceVar) \dd\spaceVar, &&
  \\
    a_{\rand}^\iRand(\uTrial,\uTest)
  &=
    \stdDiffusion\sqrt{\eigval{\iRand}} \int_\dSpace \diffusion[\iRand](\spaceVar)\uTrial(\spaceVar)\uTest(\spaceVar) \dd\spaceVar, &\quad& \iRand=1,\dots,\nRand,
  \\
    a_{\para}^\iPara(\uTrial,\uTest)
  &= 
    \int_\dSpace \partial_{\spaceVar[\iPara]}\uTrial(\spaceVar)\uTest(\spaceVar) \dd\spaceVar, && \iPara=1,2
\end{alignat*}
and
\begin{equation}
  \fSpara{\uTest} = \int_\dSpace\uTest(\spaceVar)\dd\spaceVar.\label{eq:testLinearForm}
\end{equation}
A spatial weak form of \cref{problem:test} is provided in terms of the standard infinite-dimensional Sobolev space $H^1_0(\dSpace)$ as follows:
\begin{problem}[spatial weak form]\label{problem:testWeak}
For given $(\rand,\para)\in\dRand\times\dPara$, find
\begin{alignat*}{3}
  \uSPF(\rand,\para)\in H^1_0(\dSpace) &\;\colon\quad& \aSpara{\uSPF(\rand,\para)}{\uTest} &= \fSpara{\uTest} &\quad& \forall \uTest\in H^1_0(\dSpace).
\end{alignat*}
\end{problem}
By taking the expectation and using the notation of \cref{sec:sgrbFeModel}, a spatial-stochastic weak form is given by
\begin{problem}[spatial-stochastic weak form]\label{problem:testStochasticWeak}
  For given $\para\in\dPara$, find
\begin{equation*}
  \uWPFpara\in \banach[2]{\randomVariable}{\dRand}{H^1_0(\dSpace)} \;\colon\quad \aWpara{\uWPFpara}{\uTest} = \fWpara{\uTest}\quad\forall \uTest\in \banach[2]{\randomVariable}{\dRand}{H^1_0(\dSpace)}.
\end{equation*}
\end{problem}

\subsection{Discretization} \label{sec:experimentsDiscretization}

\newcommand{\nPoly}{d}

\newcommand{\sDet}{s}
\newcommand{\sDetP}[1][\rand,\para]{\sDet(#1)}

\newcommand{\sRnd}{\bar s}
\newcommand{\sRndP}[1][\para]{\sRnd(#1)}

\newcommand{\reference}[1]{#1^\text{ref}}
\newcommand{\hilbSPFRef}{\reference{\hilbSPF}}
\newcommand{\nRandSnapRef}{\reference{\nRandSnap}}
\newcommand{\nPolyRef}{\reference{\nPoly}}
\newcommand{\hilbSgRef}{\reference{\hilbSg}}
\newcommand{\nFeRef}{\reference{\nFe}}
\newcommand{\nSgRef}{\reference{\nSg}}

The MCFE and SGFE discretizations (\cref{problem:strong,problem:discretizedWeak}) provide necessary links between the infinite-dimensional test problems (\cref{problem:testWeak,problem:testStochasticWeak}) and the respective reduced-order models (\cref{problem:reducedStrong,problem:reducedWeak}). In the following, we describe the computational details of the MCFE and SGFE discretizations of the test problem. \Cref{table:discretizationParameters} lists our choice of the relevant discretization parameters.

\begin{table}
\caption{Discretization parameters of the test problem. The default values are used in the snapshot simulations. The reference values are used to assess the accuracy of the snapshot discretizations. \label{table:discretizationParameters}}
\begin{center}
\begin{tabular}{p{0.1\textwidth}p{0.1\textwidth}p{0.1\textwidth}p{0.58\textwidth}}
\toprule
symbol & default & reference & description\\
\midrule
  $\nFe$&$225$&$961$& number of FE degrees of freedom\\
  $\nRandSnap$&$1024$&$16384$&number of MC samples of $\rand\in\dRand$\\
  $\nPoly$&$2$&$3$&degree of SG polynomials\\
  $\nSg$&$243$&$1024$&number of SG degrees of freedom: $(\nPoly+1)^\nRand$\\
\bottomrule
\end{tabular}
\end{center}
\end{table}

\paragraph{Finite element method} We derive an instance of the stochastic strong finite element problem by replacing $H^1_0(\dSpace)$ in \cref{problem:testWeak} with a finite-dimensional subspace. In particular, we employ the space $\hilbSPF\subset H^1_0(\dSpace)$ formed by continuous piecewise linear finite elements corresponding to a regular graded simplicial triangulation of $\dSpace$, characterized by the number $\nFe$ of finite element degrees of freedom. We estimate the spatial discretization error using simulations on a finer reference triangulation as a substitute for the exact solution.

\paragraph{Monte Carlo method} We provide estimates of the expectation and variance by discretizing the respective stochastic integrals using Monte Carlo quadrature in the sense of \cref{sec:mcrbFeModel}. To this end, we generate random samples $\randSnap{1},\dots,\randSnap{\nRandSnap}\in\dRand$ according to the distribution $\probability[\randomVariable]$ with a standard pseudorandom number generator. A reference simulation with a higher number of samples delivers an estimate of the sampling error.

\paragraph{Stochastic Galerkin method} Stochastic Galerkin methods estimate the expectation and variance by directly evaluating the respective stochastic integrals, given a stochastic Galerkin solution based on a finite-dimensional subspace of $\lebesgue{\randomVariable}{\dRand}$. In general, a stochastic Galerkin finite element method applied to a linear elliptic problem with a random elliptic coefficient leads to a large, block-structured system of linear algebraic equations. In our case, however, the underlying random variables $\rand[1],\dots,\rand[\nRand]$ are independent and enter the bilinear form linearly, see \eqref{eq:testBilinearForm}. Under these conditions, it is possible to find a double-orthogonal polynomial basis which decouples the blocks in the system matrix \cite{BabuskaEA2004,FrauenfelderEA2005}. The resulting block-diagonal system of equations can be solved efficiently due to the relatively small bandwidth and the ability to treat the blocks in parallel. To define a suitable double-orthogonal basis, we start with $\nRand$ spaces of possibly different dimensions, spanned by univariate Legendre polynomials over the interval $[-\sqrt{3},\sqrt{3}]$. We normalize the polynomials regarding the underlying uniform distribution and rotate the bases such that they consist of double-orthogonal univariate polynomials, as described in \cite{BabuskaEA2004}. Finally, a tensor product of these univariate double-orthogonal polynomial bases forms a basis of an $\nSg$-dimensional subspace $\hilbSg\subset\lebesgue{\randomVariable}{\dRand}$. In our experiments, we use the same polynomial degree $\nPoly$ in all directions. We assess the error associated with the choice of $\nPoly$ by comparing with a reference solution using a higher degree.

\paragraph{Reduced basis} The considered reduced-order models rely on POD spaces generated from snapshots of the underlying discretized solution. We choose $\nParaSnap=64$ as the number of training samples in the deterministic parameter domain. Consequently, \cref{sec:spaces} specifies the creation of the reduced spaces.

\subsection{Results} \label{sec:experimentsReduction}

\newcommand{\sDetRed}{\sDet^\nRb}
\newcommand{\sDetRedP}[1][\rand,\para]{\sDetRed(#1)}

\newcommand{\sRndRed}{\sRnd^\nRb}
\newcommand{\sRndRedP}[1][\para]{\sRndRed(#1)}

\newcommand{\nTest}{\nSnap_{\para}^\text{test}}

\Cref{fig:parameterDependence} presents the parameter-dependent output statistics obtained with the default 
parameter-dependent SGFE model.~The crosses in \Cref{fig:parameterDependence} correspond to the snapshot training 
parameter values provided by the pseudo random number generator. Additionally, \Cref{fig:parameterDependence} shows the 
test parameter values that are used to assess the model.
\newenvironment{parameteraxis}[1][]{
  \begin{axis}[
    width = 0.27\textwidth,
    height = 0.27\textwidth,
    xlabel = {$\para[1]$},
    ylabel = {$\para[2]$},
    xtick = {-200,-100,0,100,200},
    ytick = {-200,-100,0,100,200},
    scale only axis = true,
    view = {0}{90},
    colormap access=piecewise const,
    colorbar,
    colorbar style={
      y tick label style={
        /pgf/number format/.cd,
        fixed,
        fixed zerofill,
        precision=1,
        /tikz/.cd}},
    #1]
}{
    \addplot+[only marks,mark=x,black] table {trainingParameters.dat};
    \addplot+[only marks,mark=o,black] table {testParameters.dat};
    \addplot+[only marks,mark=square,mark options={scale=1.5},black,skip coords between index={1}{64}] table {testParameters.dat};
  \end{axis}
}

\begin{figure}
  \begin{center}
    \ifUseExternalPdfs
      \includegraphics{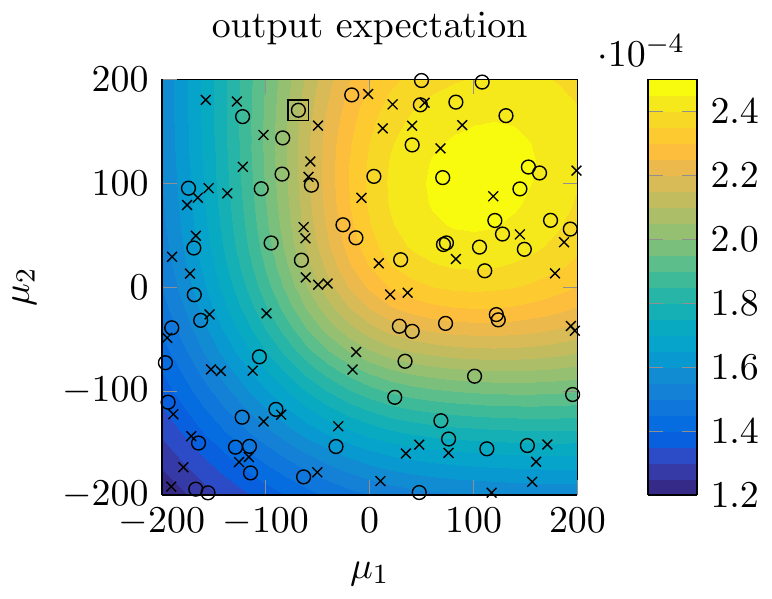}%
      \includegraphics{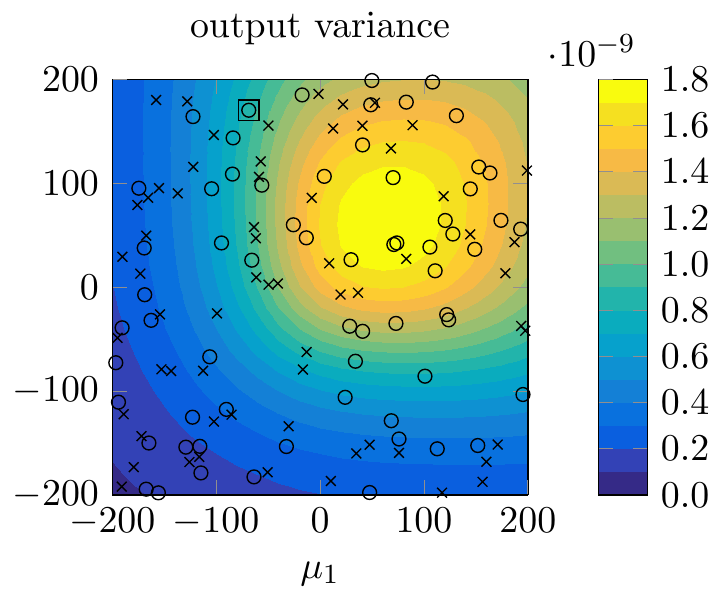}
    \else
      \begin{tikzpicture}
        \begin{parameteraxis}[
          title = {output expectation},
          point meta min=1.2e-4,
          point meta max=2.5e-4,
          colorbar style={
            ytick={
              0.00012,
              0.00014,
              0.00016,
              0.00018,
              0.00020,
              0.00022,
              0.00024}}]
          \addplot3[contour filled={number = 26},mesh/rows=21,mesh/cols=21] table {plotParameterDependenceExpectation.dat};
        \end{parameteraxis}
      \end{tikzpicture}
      \begin{tikzpicture}
        \begin{parameteraxis}[
          title = {output variance},
          point meta min=0.0e-9,
          point meta max=1.8e-9,
          colorbar style={
            ytick={
              0.0e-9,
              0.2e-9,
              0.4e-9,
              0.6e-9,
              0.8e-9,
              1.0e-9,
              1.2e-9,
              1.4e-9,
              1.6e-9,
              1.8e-9}},
          ylabel={}]
          \addplot3[contour filled={number = 18},mesh/rows=21,mesh/cols=21] table {plotParameterDependenceVariance.dat};
        \end{parameteraxis}
      \end{tikzpicture}
    \fi
  \end{center}
  \caption{Parameter-dependent output expectation $\E{\lSPpara[\cdot,\para]{\uSPF(\cdot,\para)}}$ and variance $\V{\lSPpara[\cdot,\para]{\uSPF(\cdot,\para)}}$ for the functional $\lSP$ given by \eqref{eq:functional}. Crosses mark the snapshot parameter values. The square marks the parameter value corresponding to \Cref{fig:convergencePointwise}. Circles mark the parameter values used to obtain \Cref{fig:convergenceL2}. \label{fig:parameterDependence}}
\end{figure}

The reduced basis estimates of the output statistics together with the respective error bounds are provided by 
\cref{theorem:expectationBoundsStrong,theorem:varianceBoundsDualBased} for the MCRB method and 
\cref{theorem:expectationBoundsWeak,theorem:varianceBoundsWeak} for the SGRB method.~First, we validate the error 
bounds for a single random realization of the deterministic parameter, marked by a square in 
\Cref{fig:parameterDependence}. The convergence regarding the number of reduced basis functions $\nRb$ is presented in 
\Cref{fig:convergencePointwise}. The error components of the underlying discretized solution are provided as a 
reference. Looking at the discretization errors only, we see that number of MC samples is sufficient to approximate the 
expectation but actually too small to balance the FE error in case of the variance. The SG error, on the other hand, is 
smaller than the FE error in all cases, which provides evidence that the stochastic Galerkin discretization of the 
stochastic domain is sufficiently fine. Concerning the reduced basis models, we observe that $\nRb \approx 16$ reduced 
basis functions are sufficient to obtain reduced-order estimates which are on a par with the full-order estimates in all 
considered cases. The plots suggest that all error bounds converge at the same rates as the respective errors. This is 
useful, because it implies that efficiency of the error bounds does not become significantly worse when the number of 
reduced basis functions is increased.

\newenvironment{myaxis}[1][]{
  \begin{loglogaxis}[
    width = 0.38\textwidth,
    height = 0.5\textwidth,
    xlabel= $\nRb$,
    xmin = 1,
    xmax = 64,
    xtick = {1,2,4,8,16,32,64},
    log basis x=2,
    xticklabel=\pgfmathparse{2^\tick}\pgfmathprintnumber{\pgfmathresult},
    log basis y=10,
    yticklabel=\pgfmathparse{\tick}\pgfmathprintnumber{\pgfmathresult},
    scale only axis = true,
    legend style={
      at={(0.03,0.03)},
      anchor=south west,
      cells={anchor=west},
      font=\footnotesize,
      draw=none},
    cycle list={
      {black,dashed},
      {black},
      {black,dotted},
      {black,dash dot}},
    #1]
}{
  \end{loglogaxis}
}

\begin{figure}
  \begin{center}
    \ifUseExternalPdfs
      \includegraphics{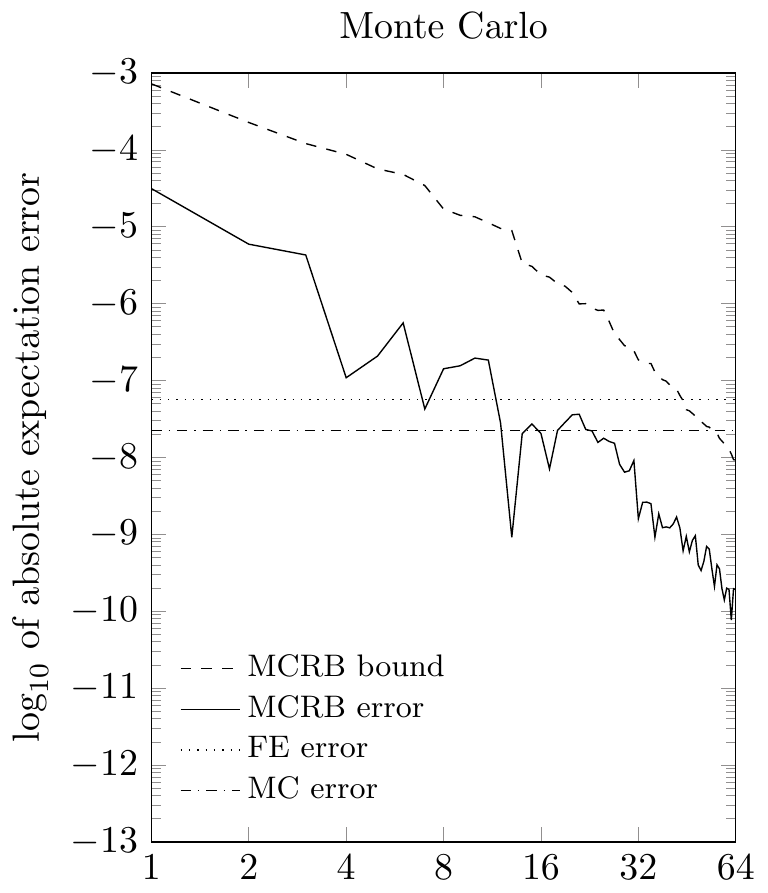}%
      \includegraphics{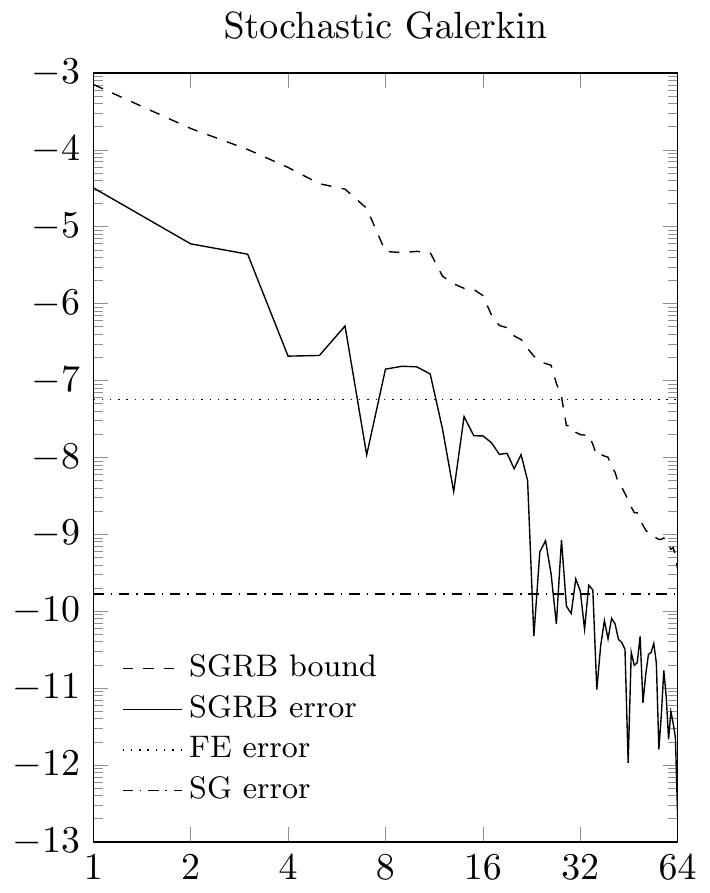}\\
      \includegraphics{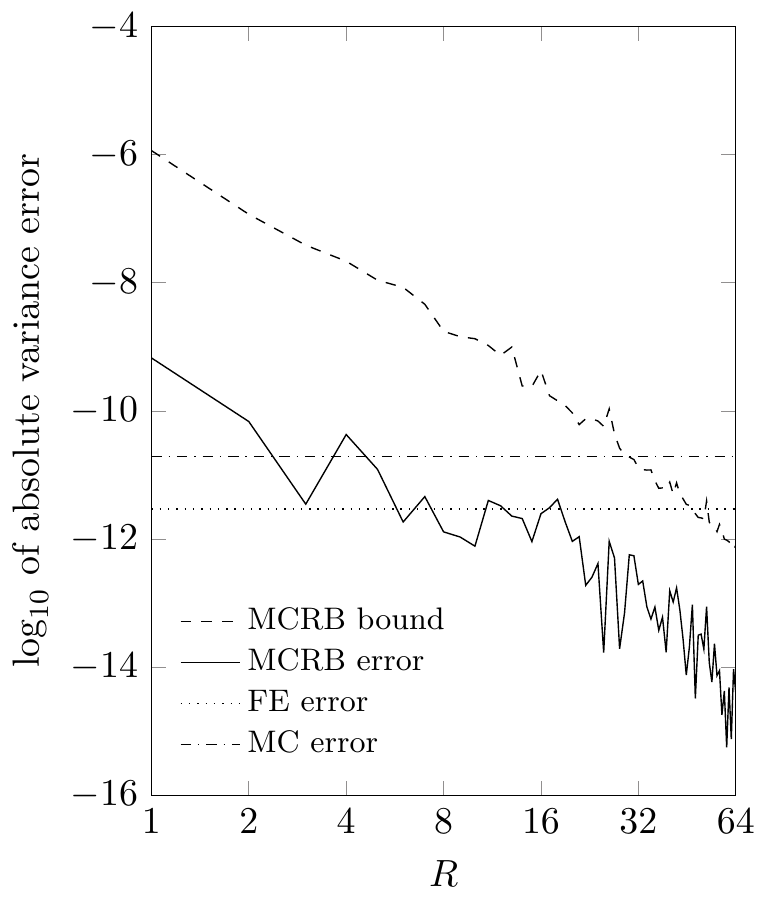}%
      \includegraphics{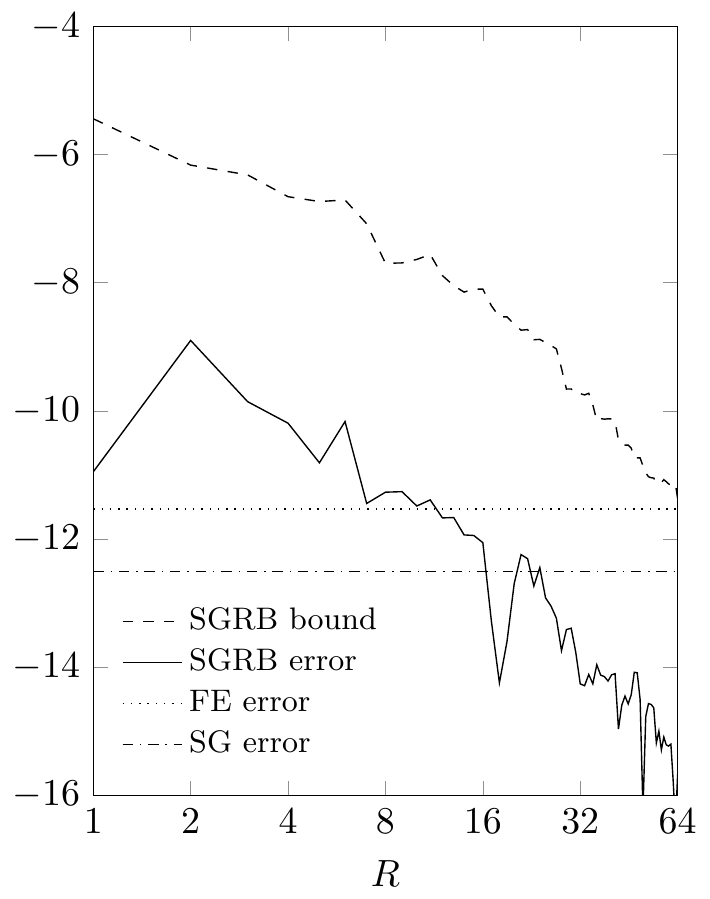}
    \else
      \begin{tikzpicture}
        \begin{myaxis}[
          ymin = 10^-13,
          ymax = 10^-3,
          xlabel = {},
          ylabel = {$\log_{10}$ of absolute expectation error},
          legend entries={
            MCRB bound,
            MCRB error,
            FE error,
            MC error},
          title = {Monte Carlo}]
          \addplot table [x=R,y=Bound]{expectationMonteCarlo.dat};
          \addplot table [x=R,y=Error]{expectationMonteCarlo.dat};
          \addplot table [x=R,y=Error]{expectationFiniteElementReference.dat};
          \addplot table [x=R,y=Error]{expectationMonteCarloReference.dat};
        \end{myaxis}
      \end{tikzpicture}
      \begin{tikzpicture}
        \begin{myaxis}[
          ymin = 10^-13,
          ymax = 10^-3,
          xlabel = {},
          legend entries={
            SGRB bound,
            SGRB error,
            FE error,
            SG error},
          title = {Stochastic Galerkin}]
          \addplot table [x=R,y=Bound]{expectationStochasticGalerkin.dat};
          \addplot table [x=R,y=Error]{expectationStochasticGalerkin.dat};
          \addplot table [x=R,y=Error]{expectationFiniteElementReference.dat};
          \addplot table [x=R,y=Error]{expectationStochasticGalerkinReference.dat};
        \end{myaxis}
      \end{tikzpicture}\\
      \begin{tikzpicture}
        \begin{myaxis}[
          ymin = 10^-16,
          ymax = 10^-4,
          ylabel = {$\log_{10}$ of absolute variance error},
          legend entries={
            MCRB bound,
            MCRB error,
            FE error,
            MC error}]
          \addplot table [x=R,y=Bound]{varianceMonteCarlo.dat};
          \addplot table [x=R,y=Error]{varianceMonteCarlo.dat};
          \addplot table [x=R,y=Error]{varianceFiniteElementReference.dat};
          \addplot table [x=R,y=Error]{varianceMonteCarloReference.dat};
        \end{myaxis}
      \end{tikzpicture}
      \begin{tikzpicture}
        \begin{myaxis}[
          ymin = 10^-16,
          ymax = 10^-4,
          legend entries={
            SGRB bound,
            SGRB error,
            FE error,
            SG error}]
          \addplot table [x=R,y=Bound]{varianceStochasticGalerkin.dat};
          \addplot table [x=R,y=Error]{varianceStochasticGalerkin.dat};
          \addplot table [x=R,y=Error]{varianceFiniteElementReference.dat};
          \addplot table [x=R,y=Error]{varianceStochasticGalerkinReference.dat};
        \end{myaxis}
      \end{tikzpicture}
    \fi
  \end{center}
  \caption{Log-log plots of the errors in the approximation of the expectation (top row) and variance (bottom row) of a linear functional with an MCRB method (left column) and an SGRB method (right column) depending on the dimension $\nRb$ of the reduced spaces, for a random point in the deterministic parameter domain. Respective error bounds and approximate FE/SG/MC discretization errors.\label{fig:convergencePointwise}}
\end{figure}

\begin{figure}
  \begin{center}
     \ifUseExternalPdfs
      \includegraphics{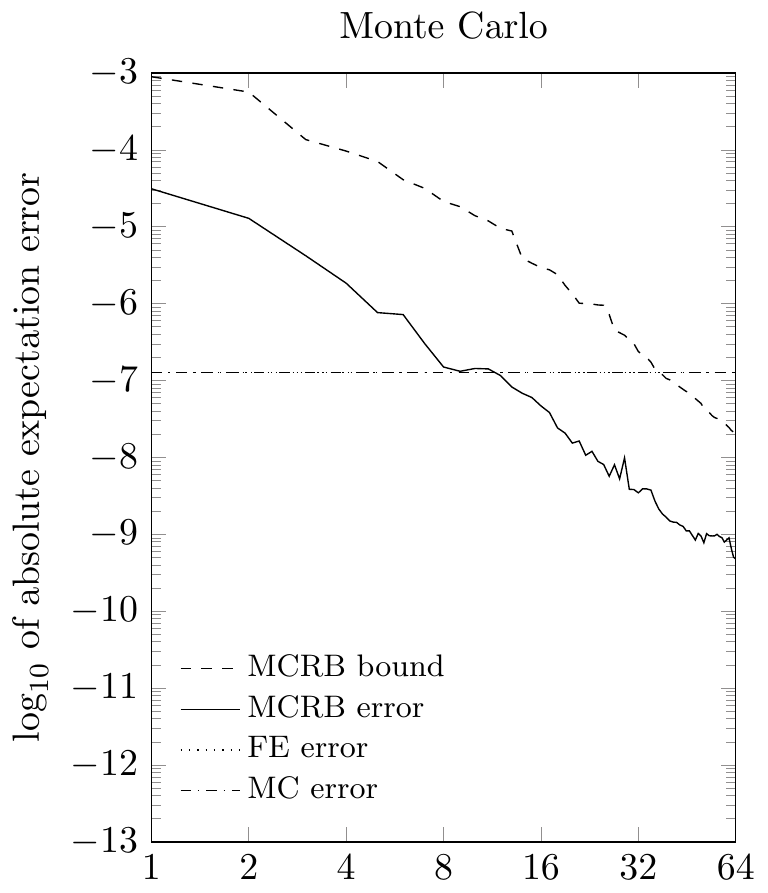}%
      \includegraphics{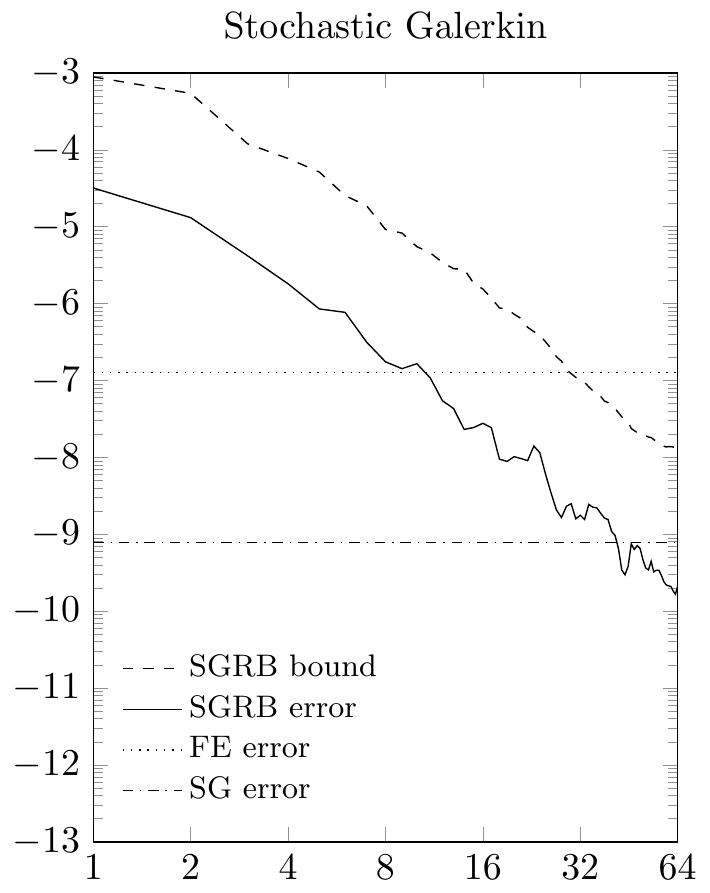}\\
      \includegraphics{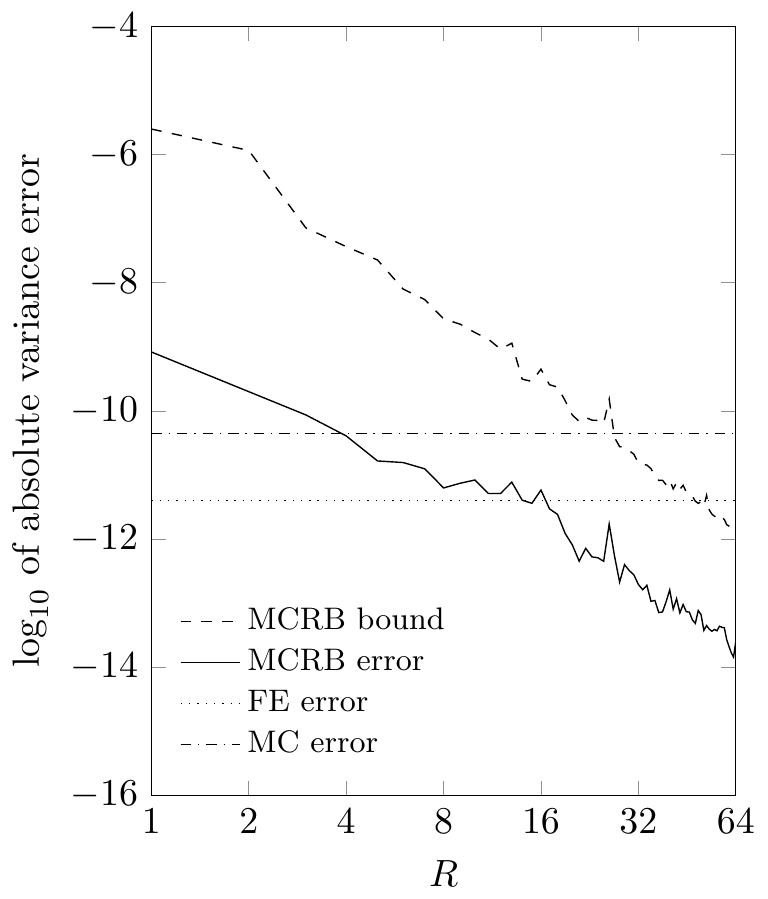}%
      \includegraphics{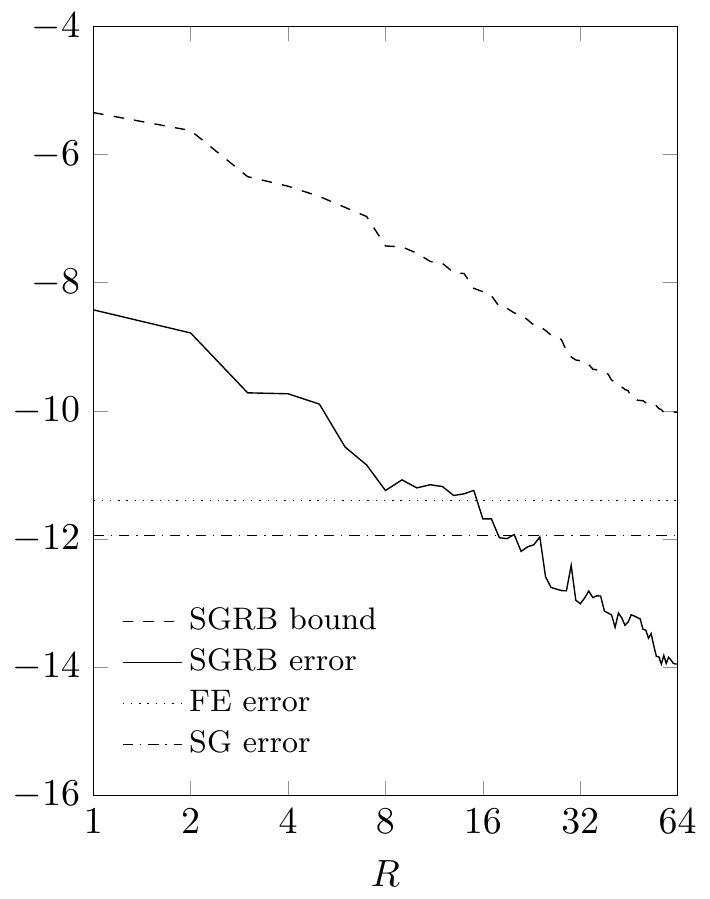}
    \else
      \begin{tikzpicture}
        \begin{myaxis}[
          ymin = 10^-13,
          ymax = 10^-3,
          xlabel = {},
          ylabel = {$\log_{10}$ of absolute expectation error},
          legend entries={
            MCRB bound,
            MCRB error,
            FE error,
            MC error},
          title = {Monte Carlo}]
          \addplot table [x=R,y=Bound]{expectationMonteCarloL2.dat};
          \addplot table [x=R,y=Error]{expectationMonteCarloL2.dat};
          \addplot table [x=R,y=Error]{expectationFiniteElementReferenceL2.dat};
          \addplot table [x=R,y=Error]{expectationMonteCarloReferenceL2.dat};
        \end{myaxis}
      \end{tikzpicture}
      \begin{tikzpicture}
        \begin{myaxis}[
          ymin = 10^-13,
          ymax = 10^-3,
          xlabel = {},
          legend entries={
            SGRB bound,
            SGRB error,
            FE error,
            SG error},
          title = {Stochastic Galerkin}]
          \addplot table [x=R,y=Bound]{expectationStochasticGalerkinL2.dat};
          \addplot table [x=R,y=Error]{expectationStochasticGalerkinL2.dat};
          \addplot table [x=R,y=Error]{expectationFiniteElementReferenceL2.dat};
          \addplot table [x=R,y=Error]{expectationStochasticGalerkinReferenceL2.dat};
        \end{myaxis}
      \end{tikzpicture}
      \begin{tikzpicture}
        \begin{myaxis}[
          ymin = 10^-16,
          ymax = 10^-4,
          ylabel = {$\log_{10}$ of absolute variance error},
          legend entries={
            MCRB bound,
            MCRB error,
            FE error,
            MC error}]
          \addplot table [x=R,y=Bound]{varianceMonteCarloL2.dat};
          \addplot table [x=R,y=Error]{varianceMonteCarloL2.dat};
          \addplot table [x=R,y=Error]{varianceFiniteElementReferenceL2.dat};
          \addplot table [x=R,y=Error]{varianceMonteCarloReferenceL2.dat};
        \end{myaxis}
        \end{tikzpicture}
        \begin{tikzpicture}
        \begin{myaxis}[
          ymin = 10^-16,
          ymax = 10^-4,
          legend entries={
            SGRB bound,
            SGRB error,
            FE error,
            SG error}]
          \addplot table [x=R,y=Bound]{varianceStochasticGalerkinL2.dat};
          \addplot table [x=R,y=Error]{varianceStochasticGalerkinL2.dat};
          \addplot table [x=R,y=Error]{varianceFiniteElementReferenceL2.dat};
          \addplot table [x=R,y=Error]{varianceStochasticGalerkinReferenceL2.dat};
        \end{myaxis}
      \end{tikzpicture}
    \fi
  \end{center}
  \caption{Log-log plots of the errors in the approximation of the expectation (top row) and variance (bottom row) of a linear functional with an MCRB method (left column) and an SGRB method (right column) depending on the dimension $\nRb$ of the reduced spaces, measured in terms of an approximate $\lebesgue{}{\dPara}$-norm. Respective error bounds and approximate FE/SG/MC discretization errors. It is a coincidence that the approximate FE and MC errors of the estimated expectation are very close in this outcome of the random experiment.\label{fig:convergenceL2}}
\end{figure}

We assess the convergence globally over $\dPara$ in order to confirm that the point-wise observation in the deterministic parameter space provided by \Cref{fig:convergencePointwise} is not a lucky coincidence. To this end, we employ an $\lebesgue{}{\dPara}$-norm, approximated using Monte Carlo quadrature with $\nTest=64$ samples shown as circles in \Cref{fig:parameterDependence}. The convergence results are presented in \Cref{fig:convergenceL2}. Since we have averaged over the parameter space, the plots appear less random than the plots in \Cref{fig:convergencePointwise}. The convergence of the estimates and the corresponding bounds correspond quite well. Moreover, the MCRB and SGRB methods perform similar in terms of accuracy per number of basis functions.

In \Cref{fig:convergencePointwise,fig:convergenceL2}, it appears that the SGRB error bound over-estimates the actual error more severely (by 4 orders of magnitude) than the MCRB error bound (2 orders of magnitude). A closer inspection of the individual components of the error estimate reveals that for larger $\nRb$ the lower-order term involving the continuity factor becomes responsible for the major portion of the error estimate. In particular, for $\nRb=64$ at the parameter point corresponding to \Cref{fig:convergencePointwise}, the terms on the right-hand side of \cref{theorem:varianceBoundsWeak} amount to approximately $4.2\cdot10^{-12}$, $1.2\cdot10^{-19}$ and $5.2\cdot10^{-14}$, respectively.

\section{Conclusion}

We have observed that the SGRB method can deliver estimates of the expectation and variance of linear outputs with an 
accuracy similar to the MCRB method. Also, the SGRB error bounds regarding the expected value were very close to the 
respective MCRB bounds in our experiments. Concerning the variance, the presented SGRB bounds overestimate the error 
more severely than the available MCRB bounds, which opens opportunities for future improvement of the SGRB variance 
bound. Nevertheless, the MCRB and SGRB variance bounds both converge at the same order depending on the number of 
reduced basis functions. This behavior is reflected by the theory, which predicts the same order of convergence in terms 
of dual norms of residuals.

The MCRB statistical output estimates and error bounds require a Monte Carlo sampling of the reduced quantities 
point-wise in the random parameter domain. In our tests, 1024 samples were sufficient to balance the finite element 
error for the expectation, but an accurate prediction of the variance would require even more samples.~The SGRB 
estimates and bounds, on the other hand, are obtained by an exact integration of the corresponding reduced basis 
expansions in the setup phase of the reduced-order model, and, thus, do not rely on Monte Carlo sampling. As a 
consequence, the primal and dual SGRB problems need to be solved only once for each new deterministic parameter. 
This benefit comes at the cost of a more expensive offline phase. In our tests, the SGRB and the MCRB 
methods achieved a similar reduction of degrees of freedom for a given error tolerance. As a consequence, the possible 
online speedup of SGRB methods compared to MCRB methods is in the order of magnitude of the number of 
Monte Carlo samples. This is particularly attractive in scenarios where evaluating the reduced order 
model shall be as inexpensive as possible for a large number of parameter queries while the offline costs are not a 
primary concern.




\bibliographystyle{elsarticle-num-names} 
\bibliography{input/literature.bib}
%
%
%
\end{document}